\newcommand{\der}{\delta}
\newcommand{\iot}{\int_{0}^{t}}
\newcommand{\ott}{[0,T]}
\newcommand{\1}{{\bf 1}}
\newcommand{\R}{\mathbb R}
\newcommand{\N}{\mathbb N}
\newcommand{\be}{\mathbf{E}}
\newcommand{\bp}{\mathbf{P}}
\newcommand{\cac}{\mathcal C}
\newcommand{\cf}{\mathcal F}
\newcommand{\al}{\alpha}
\newcommand{\la}{\lambda}
\newcommand{\oom}{\Omega}
\newcommand{\si}{\sigma}
\newcommand{\te}{\vartheta}
\newcommand{\tte}{\Theta}
\newcommand{\lp}{\left(}
\newcommand{\rp}{\right)}
\newcommand{\lln}{\left|}
\newcommand{\rrn}{\right|}
\newcommand{\lla}{\left\langle}
\newcommand{\rra}{\right\rangle}
\newtheorem{theorem}{Theorem}[section]
\newtheorem{hypothesis}[theorem]{Hypothesis}
\newtheorem{lemma}[theorem]{Lemma}
\newtheorem{notation}[theorem]{Notation}
\newtheorem{proposition}[theorem]{Proposition}
\theoremstyle{remark}
\newtheorem{remark}[theorem]{Remark}
\theoremstyle{remark}
\newcommand{\bean}{\begin{eqnarray*}}
\newcommand{\eean}{\end{eqnarray*}}
\newcommand{\ben}{\begin{enumerate}}
\newcommand{\een}{\end{enumerate}}
\newcommand{\beq}{\begin{equation}}
\newcommand{\eeq}{\end{equation}}
\address{Andreas Neuenkirch, Fachbereich Mathematik,
TU Kaiserslautern, 
Postfach 3049,  
D-67663 Kaiserslautern, Germany {\tt neuenkirch@mathematik.uni-kl.de}                  
}
\address{Samy Tindel, Institut {\'E}lie Cartan Nancy, Universit\'e de Nancy 1, B.P. 239,
54506 Vand{\oe}uvre-l{\`e}s-Nancy Cedex, France {\tt tindel@iecn.u-nancy.fr}}
\begin{document}

\title[LSE for SDEs with additive fractional noise]{A least square-type procedure for parameter estimation in stochastic differential equations  \\ with additive fractional noise}

\date{\today}

\author{Andreas Neuenkirch \and Samy Tindel}
\date{\today}
\begin{abstract}
We study a least square-type estimator for  an unknown parameter in the drift coefficient of a 
stochastic differential equation with additive fractional noise of Hurst parameter $H>1/2$. The estimator is based on discrete time observations of the  stochastic differential equation, 
and using tools from ergodic theory  and stochastic analysis we derive its  strong consistency.
\end{abstract}

\thanks{S. Tindel is member of the BIGS (Biology, Genetics and Statistics) team at INRIA}

\subjclass[2010]{60G15; 62M09; 62F12}
\date{\today}
\keywords{fractional Brownian motion, parameter estimation, least  square procedure, ergodicity}

\maketitle

\section{Introduction}
In this article, we will consider the following  $\R^d$-valued stochastic differential equation (SDE)
\begin{equation}\label{eq:sde}
Y_t=y_0+\iot  b(Y_s;\te_0) \, ds +  \sum_{j=1}^m \si_j B_t^{j} , \qquad t\in\ott.
\end{equation}
Here $y_0 \in \mathbb{R}^d$ is a given initial condition, $B=(B^{1}, \ldots, B^{m})$ is an $m$-dimensional fractional Brownian motion (fBm) with Hurst parameter $H \in (0,1)$, the unknown parameter 
$\te_0$ lies in a certain set $\tte$ which will be specified later on, $\{b(\cdot;\te), \, \te\in\tte \}$ is a family of drift coefficients with $b(\cdot;\te):\R^d\to\R $, and $\si_1, \ldots, \si_m \in \mathbb{R}^d$ are assumed to be  known diffusion coefficients. 

\smallskip

Let us recall  that $B$ is a centred Gaussian process defined on a complete probability space $(\oom,\cf,\bp)$. Its law is thus characterized by its covariance function, which is defined by
$$
\be \big(  B_t^i \, B_s^j \big) = \frac 12 \lp t^{2H} + s^{2H} - |t-s|^{2H}  \rp \, \1_{(i=j)},
\qquad s,t\in\R.
$$
The variance of the   increments of $B$ is  then given by
$$
\be \, | B_t^i - B_s^i |^2  = |t-s|^{2H}, \qquad s,t\in\R, \quad i=1,\ldots, m,
$$
 and this implies that  almost surely the fBm paths are $\gamma$-H\"{o}lder
  continuous for any $\gamma<H$. Furthermore, for $H=1/2$, fBm coincides with the usual Brownian motion, converting the family $\{B^H,\, H\in(0,1)\}$ into the most natural generalization of this classical process. In the current paper we assume that the Hurst coefficient satisfies $H>1/2$ and we focus on the estimation of the parameter $\te_0\in\tte$.  Note that the Hurst parameter and the diffusion coefficients can be estimated via the quadratic variation of $Y$, see e.g. \cite{begyn,coeur,IL}.

\smallskip

Estimators for the unknown parameter in equation (\ref{eq:sde}) based on continuous observation of $Y$ have been studied e.g. in \cite{BEO,HN_foup,KL,LB,PL,Rao,TV}.  Estimators based on discrete  time data, which are important for practical applications, are then obtained via discretization.
However, to the best of our knowledge no genuine estimators based on discrete time data have been analyzed yet. 

\smallskip

We propose here a least square estimator for $\te_0$ based on discrete observations of the process $Y$ at times $\{t_k;\, 0\le k \le n\}$. For simplicity, we shall take equally spaced observation times with $t_{k+1}-t_{k}=\kappa \, n^{-\alpha}:=\al_n$ with  given  $\alpha \in (0,1), \kappa > 0$.  We call our method least square-type procedure, insofar as we consider a quadratic statistics of the form
\begin{equation}\label{eq:def-Qn}
Q_n(\te)= \frac{1}{n \alpha_n^2}\sum_{k=0}^{n-1} \left(
\lln  \der Y_{t_{k} t_{k+1}} -  b(Y_{t_k};\te) \al_n\rrn^2 - \|\sigma \|^2 \alpha_n^{2H} \right),
\end{equation}
where $\der Y_{u_1u_2}:=Y_{u_2}-Y_{u_1}$ for any $0\le u_1\le u_2\le T$
and $\|\sigma \|^2= \sum_{j=1}^m |\sigma_j|^2$.

\smallskip

Let us now describe the assumptions under which we shall work, starting from a standard hypothesis on the parameter set $\tte$:

\begin{hypothesis}\label{hyp:Theta}
The set $\tte$ is compactly embedded in $\R^q$ for a given $q\ge 1$.
\end{hypothesis}

In order to describe the assumptions on our coefficients $b$, we will use the following notation for partial derivatives:

\begin{notation}
Let $f:\R^d\times\tte\to\R$ be a $\cac^{p_1,p_2}$ function for $p_1,p_2\ge 1$. Then for any tuple $(i_1,\ldots i_{p_1})\in\{1,\ldots,d\}^{p_1}$, we set $\partial_{i_1\ldots i_{p_1}} f$ for $\frac{\partial^{p_1} f}{\partial x_{i_1}\ldots \partial x_{i_{p_1}}}$. Moreover, we will write 
$\partial_x  f$ resp. $\partial_{\vartheta} f$ for the Jacobi-matrices
$(\partial_{x_1} f, \ldots, \partial_{x_d}f)$ and 
$(\partial_{\te_1} f, \ldots, \partial_{\te_q}f) $ .
\end{notation}

With this notation in mind, our drift coefficients and their derivatives will satisfy a polynomial growth condition, plus an inward condition which is traditional for estimation procedures in the Brownian diffusion case (see e.g \cite{flor,Kas}):
\begin{hypothesis}\label{hyp:f-coercive} We have $b \in \mathcal{C}^{1,1} (\mathbb{R}^d \times \tte; \mathbb{R}^d)$ and 
there exist constants $c_1, c_2>0$ and $N \in \mathbb{N}$ such that:

\noindent
\emph{(i)} For every $x,y\in\R^{d}$ and $\te\in\tte$ we have
\begin{equation*}
\lla  b(x;\te)-b(y;\te), \, x-y\rra \le  - c_1 |x-y|^2
\end{equation*}

\noindent
\emph{(ii)} For every $x\in\R^{d}$ and $\te\in\tte$ the following growth bounds are satisfied:
\begin{equation*} 
|b(x;\te)| \le c_2 \lp  1+|x|^N\rp,
\quad
|\partial_x b(x;\te)| \le c_2 \lp  1+|x|^N\rp,  \quad   |\partial_{\vartheta} b(x;\te)| \le c_2 \lp  1+|x|^N \rp.
\end{equation*}
\end{hypothesis}

As a consequence of the above assumptions on the drift coefficient and the initial condition,  for given $\vartheta_0 \in \tte$  the solution of equation (\ref{eq:sde})  converges  for $t \rightarrow \infty$ to a stationary and ergodic stochastic process $(\overline{Y}_t, t \geq 0)$, 
see the next section. 

\smallskip

Finally, we also assume that our-drift coefficient is of gradient-type, i.e.:

 \begin{hypothesis}\label{hyp:f-gradtype} 
There exists a function $U \in \mathcal{C}^{2,1}(\mathbb{R}^d \times \tte; \mathbb{R})$ such that
$$ \partial_x U (x; \te)= b(x; \te), \qquad x \in  \mathbb{R}^d , \, \, \te \in \tte.$$
\end{hypothesis}

With those assumptions in mind, we obtain the following convergence result:

\begin{theorem}\label{thm:lse-convergence_prev}
Assume that the Hypotheses \ref{hyp:Theta}, \ref{hyp:f-coercive} and \ref{hyp:f-gradtype}  are satisfied for equation (\ref{eq:sde}) and that we moreover have $H>1/2$. Let $Q_n(\te)$ be defined by (\ref{eq:def-Qn}). Then we have 
\begin{equation}\label{eq:lim-fbm-Q}
\sup_{ \vartheta \in \Theta}  \left| Q_n(\vartheta) -    \left( \mathbf{E} \, |  b( \overline{Y}_0 ;\te_0)|^2 - \mathbf{E}|  b( \overline{Y}_0 ;\te) |^{2}    \right)   \right| \rightarrow 0
\end{equation}
in the $\bp$-almost sure sense.
\end{theorem}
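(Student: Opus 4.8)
The plan is to expand the square in (\ref{eq:def-Qn}) around the exact noise increment and to treat the resulting pieces by a combination of the ergodic theorem for $\overline{Y}$, the change-of-variables formula for Young integrals, and concentration estimates for fractional Gaussian noise. Setting $M_k := \sum_{j=1}^m \si_j\,\der B^j_{t_k t_{k+1}}$ and $A_k(\te):=\int_{t_k}^{t_{k+1}} b(Y_s;\te_0)\,ds - b(Y_{t_k};\te)\,\al_n$, one has $\der Y_{t_k t_{k+1}}-b(Y_{t_k};\te)\,\al_n = A_k(\te)+M_k$, and since the $B^j$ are independent we get the exact identity $\be\,\lln M_k\rrn^{2}=\|\si\|^2\al_n^{2H}$. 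Hence the statistic splits as
\begin{equation*}
Q_n(\te)=\frac{1}{n\al_n^2}\sum_{k=0}^{n-1}\lln A_k(\te)\rrn^{2}+\frac{2}{n\al_n^2}\sum_{k=0}^{n-1}\lla A_k(\te),M_k\rra+\frac{1}{n\al_n^2}\sum_{k=0}^{n-1}\lp\lln M_k\rrn^{2}-\be\lln M_k\rrn^{2}\rp,
\end{equation*}
and I would study the three sums separately before passing to the limit in $\te$ uniformly.

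The basic tool for the first sum is an ergodic averaging principle: for $f$ of polynomial growth,
\begin{equation*}
\frac1n\sum_{k=0}^{n-1}f(Y_{t_k})=\frac{1}{T_n}\sum_{k=0}^{n-1}\al_n\,f(Y_{t_k})\longrightarrow \be\,f(\overline{Y}_0),\qquad T_n:=n\al_n=\kappa\,n^{1-\al},
\end{equation*}
$\bp$-almost surely, which I would justify by viewing the left-hand side as a Riemann sum for $\frac{1}{T_n}\int_0^{T_n}f(Y_s)\,ds$ --- the discretization error being controlled through the $\ga$-H\"older continuity of $Y$ with $\ga<H$ and the polynomial growth of $f$ --- and then invoking the ergodicity of $\overline{Y}$ together with the a.s.\ convergence of $Y$ to the stationary regime as $T_n\to\infty$. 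Applied to $\lln A_k(\te)\rrn^{2}$, after discarding the remainder $R_k(\te):=\int_{t_k}^{t_{k+1}}\lp b(Y_s;\te_0)-b(Y_{t_k};\te_0)\rp ds=O(\al_n^{1+\ga})$ (whose contribution is $O(\al_n^{2\ga})$ by the growth bounds of Hypothesis \ref{hyp:f-coercive}), the first sum converges to $\be\,\lln b(\overline{Y}_0;\te_0)-b(\overline{Y}_0;\te)\rrn^{2}$.

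The cross term is the heart of the matter and the place where Hypothesis \ref{hyp:f-gradtype} and the restriction $H>1/2$ are essential: unlike in the Brownian case, for $H>1/2$ the past of $Y$ and the increment $M_k$ are correlated, so $\be\,\lla b(Y_{t_k};\te),M_k\rra$ does \emph{not} vanish and is of exact order $\al_n$, whence a nontrivial contribution at scale $\tfrac{2}{n\al_n}\cdot n\cdot O(\al_n)=O(1)$. To identify it I would substitute $M_k=\der Y_{t_k t_{k+1}}-\int_{t_k}^{t_{k+1}}b(Y_s;\te_0)\,ds$, so that the leading part $\frac{2}{n\al_n}\sum_k\lla b(Y_{t_k};\te_0)-b(Y_{t_k};\te),M_k\rra$ of the cross term splits into an \emph{integral} piece and an \emph{increment} piece. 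The integral piece is again an ergodic average and converges to an explicit quadratic functional of $b(\overline{Y}_0;\te_0)$ and $b(\overline{Y}_0;\te)$. For the increment piece I would use the gradient structure $b=\partial_x U$: by the change-of-variables formula for Young integrals (valid since $\ga>1/2$ and $U\in\cac^{2,1}$), for any fixed $\vp\in\tte$ the sum $\sum_k\lla b(Y_{t_k};\vp),\der Y_{t_k t_{k+1}}\rra$ is, up to a Riemann-sum error of higher order, the telescoping quantity $U(Y_{T_n};\vp)-U(Y_0;\vp)$; dividing by $n\al_n=T_n\to\infty$ and using the polynomial growth of $U$ together with the at most sub-polynomial a.s.\ growth of $\sup_{s\le T_n}\lln Y_s\rrn$ (from uniform moment bounds), this piece vanishes. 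Collecting these contributions with the limit of the first sum and simplifying the resulting inner products then produces the right-hand side of (\ref{eq:lim-fbm-Q}).

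It remains to show that the centred quadratic sum $\frac{1}{n\al_n^2}\sum_k(\lln M_k\rrn^{2}-\be\lln M_k\rrn^{2})$ tends to $0$ almost surely. Here I would estimate its variance through the covariance structure of fractional Gaussian noise, $\be[\der B^j_{t_k t_{k+1}}\der B^j_{t_l t_{l+1}}]=\al_n^{2H}\rho(k-l)$ with $\rho(r)\sim H(2H-1)|r|^{2H-2}$, bound the resulting double sum, and deduce a.s.\ convergence along the polynomial grid $n\mapsto T_n$ by a Borel--Cantelli argument; this is where $H>1/2$ and the precise rate $\al_n=\kappa\,n^{-\al}$ enter quantitatively. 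Finally, all the convergences above are pointwise in $\te$, so the last step is to promote them to the uniform statement over $\tte$. Since $\tte$ is compact (Hypothesis \ref{hyp:Theta}) and both $\te\mapsto Q_n(\te)$ and the limit are Lipschitz in $\te$, with a Lipschitz constant controlled uniformly in $n$ through the growth bound on $\partial_{\vartheta}b$ in Hypothesis \ref{hyp:f-coercive}, a standard equicontinuity argument upgrades pointwise a.s.\ convergence on a countable dense subset of $\tte$ to the uniform convergence (\ref{eq:lim-fbm-Q}). The main obstacle throughout is the cross term: its non-vanishing mean, specific to the long-memory regime $H>1/2$, is precisely what forces the use of the gradient hypothesis and the Young change-of-variables formula.
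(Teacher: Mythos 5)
Your overall architecture coincides with the paper's own proof: your $A_k$--$M_k$ expansion is \eqref{eq:exp-Q-n1} with $r_k$ absorbed into $A_k(\vartheta)$, your ergodic treatment of the quadratic drift term is Lemma \ref{ergodic:disc}, and your handling of the cross term --- gradient hypothesis, Young chain rule turning $\sum_k\langle b(Y_{t_k};\varphi),\delta Y_{t_k t_{k+1}}\rangle$ into the telescoping quantity $U(Y_{T_n};\varphi)-U(y_0;\varphi)$ up to a Riemann-sum error of order $\alpha_n^{2\lambda-1}$, plus an ergodic average --- is exactly the mechanism of Lemma \ref{ergodic_sum:fbm}, organized at the discrete level rather than through the continuous integral $\frac{1}{T_n}\int_0^{T_n}\langle f(Y_u;\vartheta),dF_u\rangle$. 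One bookkeeping point: carried out, the inner-product algebra yields the limit $\mathbf{E}|b(\overline{Y};\vartheta)|^2-\mathbf{E}|b(\overline{Y};\vartheta_0)|^2$, the \emph{negative} of the right-hand side of \eqref{eq:lim-fbm-Q}; the paper's own proof ends with the same expression, so the discrepancy sits between the paper's statement and its proof (and is immaterial for Theorem \ref{thm:lse-convergence}, where only $|Q_n|$ is minimized), but your claim to recover the displayed right-hand side is not literally what your computation gives.

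The genuine gap is your treatment of $Q_n^{(3)}=\frac{1}{n\alpha_n^2}\sum_{k}\big(|M_k|^2-\mathbf{E}|M_k|^2\big)$. A variance bound plus Chebyshev and Borel--Cantelli cannot close this step. Doing the computation you propose gives, by self-similarity, $\mathrm{Var}(Q_n^{(3)})\asymp\alpha_n^{4H-4}\,n^{-2}\,\mathrm{Var}\big(\sum_{k=0}^{n-1}[|\delta B_{kk+1}|^2-1]\big)$, which (up to logarithms) is of order $n^{(4-4H)\alpha-1}$ for $H\in(1/2,3/4)$ and $n^{-(4-4H)(1-\alpha)}$ for $H>3/4$. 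Whenever these quantities decay at all, the decay exponent is strictly less than one, so $\sum_n\varepsilon^{-2}\mathrm{Var}(Q_n^{(3)})$ diverges and plain Chebyshev plus Borel--Cantelli yields no almost sure statement. Your formula also exposes that for $H\in(1/2,3/4)$ the variance tends to zero only under the extra constraint $(4-4H)\alpha<1$; this quantitative tension is precisely what the step must resolve, and your sketch does not engage with it (nor, for that matter, does the paper's Lemma \ref{inc_fbm}, whose quoted rate $n^{p(2H-1)}$ follows from \eqref{BM1}--\eqref{BM3} only when $H\ge 3/4$). The missing idea, which is what the paper actually uses, is that $Q_n^{(3)}$ lives in the second Wiener chaos, where all $L^p$ norms are equivalent to the $L^2$ norm: the variance estimate then upgrades to a $p$-th moment estimate with the same polynomial rate for \emph{every} $p$, and Lemma \ref{pathwise} (Borel--Cantelli with arbitrarily high moments) delivers almost sure convergence. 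A subsequence argument (your ``along the polynomial grid'') is not a substitute here, because the grid $t_k=k\kappa n^{-\alpha}$ itself changes with $n$: statistics with nearby indices are built from entirely different increments, so oscillations between subsequence points are not cheaply controlled.

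A secondary problem is your uniformity step. The map $\vartheta\mapsto Q_n(\vartheta)$ is \emph{not} Lipschitz with a constant bounded uniformly in $n$ ``through the growth bound on $\partial_\vartheta b$'': the derivative $\partial_\vartheta Q_n$ contains the weighted noise sum $\frac{2}{n\alpha_n}\sum_k\langle\partial_\vartheta b(Y_{t_k};\vartheta),\delta F_{t_k t_{k+1}}\rangle$, whose triangle-inequality bound is of order $\alpha_n^{H-1}\to\infty$; showing it is actually bounded is again a statement of cross-term type (Lemma \ref{ergodic_sum:fbm} applied to $\partial_\vartheta U$), so the argument as stated is circular. The paper obtains uniformity piecewise instead: Lipschitz bounds and Arzel\`a--Ascoli for the ergodic averages, and direct $\vartheta$-uniform pathwise bounds for the remainder and noise-error terms; your proof should be restructured the same way.
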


This convergence is in contrast to the case $H=1/2$, i.e. to the case of SDEs with additive Brownian noise. There it holds 
\begin{equation}\label{eq:lim-brownian-Q}
\sup_{ \vartheta \in \Theta}  \left| Q_n(\vartheta) -    \left( \mathbf{E} \, |  b( \overline{Y}_0 ;\te_0)-  b( \overline{Y}_0 ;\te) |^{2}    \right)   \right| \rightarrow 0 
\end{equation}
in the $\bp$-almost sure sense, see Remark \ref{H12},
and usually the consistent least squares estimator
$$ \textrm{argmin}_{\te \in \tte} \sum_{k=0}^{n-1} 
\lln  \der Y_{t_{k} t_{k+1}} -  b(Y_{t_k};\te) \al_n\rrn^2,$$
is considered, see e.g. \cite{flor,Kas}. The difference in the limits \eqref{eq:lim-fbm-Q} and \eqref{eq:lim-brownian-Q} is due to the higher smoothness and long-range dependence of fractional Brownian motion for $H>1/2$. Our estimator can thus be seen as a ''zero squares'' estimator instead of a classical least square estimator. In order to show its convergence, we shall work under the following natural identifiability  assumption:
\begin{hypothesis}\label{hyp:identifiability}
For any $\te_0\in\tte$, we have 
\begin{equation*}
\mathbf{E} \, |  b( \overline{Y}_0 ;\te_0)|^2  = \mathbf{E}|  b( \overline{Y}_0 ;\te) |^{2} 
\quad\mbox{iff}\quad
\te=\te_0.
\end{equation*}
\end{hypothesis}

 With this additional Hypothesis, the main result of the current article is the consistency of the zero squares estimator based on the statistics $Q_n$:

\begin{theorem}\label{thm:lse-convergence}
Assume that the Hypotheses \ref{hyp:Theta}, \ref{hyp:f-coercive}, \ref{hyp:f-gradtype} and \ref{hyp:identifiability} are satisfied for equation (\ref{eq:sde}) and let $H>1/2$. Let $Q_n(\te)$ be defined by (\ref{eq:def-Qn}), and let $\widehat{\te}_n={\rm argmin}_{\te\in\tte} \left| Q_n(\te) \right|$. Then for any $\te_0\in\tte$, we have $\lim_{n\to\infty}\widehat{\te}_n=\te_0$ in the $\bp$-almost sure sense.
\end{theorem}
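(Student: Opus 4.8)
The plan is to treat this as a standard argmin-consistency (M-estimation) argument, built entirely on top of the uniform convergence already furnished by Theorem~\ref{thm:lse-convergence_prev}. Throughout I would abbreviate the deterministic limit by
\begin{equation*}
F(\te) := \be \, |b(\overline{Y}_0;\te_0)|^2 - \be \, |b(\overline{Y}_0;\te)|^2 ,
\end{equation*}
so that Theorem~\ref{thm:lse-convergence_prev} reads $\sup_{\te\in\tte}|Q_n(\te)-F(\te)|\to 0$ $\bp$-almost surely, that $F(\te_0)=0$, and that Hypothesis~\ref{hyp:identifiability} asserts precisely that $\te_0$ is the \emph{unique} zero of $F$ in $\tte$.

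First I would record the analytic properties of $F$. Using the polynomial growth of $b$ and $\partial_\te b$ from Hypothesis~\ref{hyp:f-coercive}(ii) together with the fact that the stationary solution $\overline{Y}_0$ admits moments of every order (established in the next section), dominated convergence shows that $\te\mapsto \be\,|b(\overline{Y}_0;\te)|^2$ is continuous, so that $F$ is continuous on $\tte$. Since $\tte$ is compact by Hypothesis~\ref{hyp:Theta} and $\te_0$ is the unique zero of the continuous nonnegative function $|F|$, the minimum is \emph{well separated}: for every $\ep>0$ the quantity
\begin{equation*}
\eta_\ep := \inf\{\, |F(\te)| : \te\in\tte,\ |\te-\te_0|\ge \ep \,\}
\end{equation*}
is attained and strictly positive, by compactness of the set $\{\te\in\tte: |\te-\te_0|\ge\ep\}$ and Hypothesis~\ref{hyp:identifiability}.

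Next I would fix the almost sure event $\oom_0$ on which the uniform convergence of Theorem~\ref{thm:lse-convergence_prev} holds, and argue pathwise on $\oom_0$. The estimator $\widehat{\te}_n$ exists because $\te\mapsto|Q_n(\te)|$ is continuous on the compact set $\tte$, measurability of the selection being guaranteed by a standard measurable-selection argument. Since $\widehat{\te}_n$ minimizes $|Q_n|$ we have $|Q_n(\widehat{\te}_n)|\le |Q_n(\te_0)|$, and as $Q_n(\te_0)\to F(\te_0)=0$ this gives $|Q_n(\widehat{\te}_n)|\to 0$. Transferring to the limit via the uniform bound yields
\begin{equation*}
|F(\widehat{\te}_n)| \le |F(\widehat{\te}_n)-Q_n(\widehat{\te}_n)| + |Q_n(\widehat{\te}_n)| \le \sup_{\te\in\tte}|F(\te)-Q_n(\te)| + |Q_n(\te_0)| \longrightarrow 0 .
\end{equation*}

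Finally I would close the argument with the separation estimate: fixing $\ep>0$, for $n$ large enough one has $|F(\widehat{\te}_n)|<\eta_\ep$, which by the very definition of $\eta_\ep$ forces $|\widehat{\te}_n-\te_0|<\ep$. Since $\ep>0$ is arbitrary, this proves $\widehat{\te}_n\to\te_0$ on $\oom_0$, hence $\bp$-almost surely. Within this scheme the genuinely substantial input is Theorem~\ref{thm:lse-convergence_prev}; the only real work left inside the present proof is the continuity and well-separation of $F$, where one must control $\be\,|b(\overline{Y}_0;\te)|^2$ uniformly in $\te$ through the moment bounds on the stationary process---this is the step I would expect to require the most care, though it stays routine given Hypotheses~\ref{hyp:Theta} and~\ref{hyp:f-coercive}.
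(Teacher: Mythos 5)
Your proof is correct and follows essentially the same route as the paper: both reduce the theorem to the uniform almost sure convergence of Theorem~\ref{thm:lse-convergence_prev} together with Hypothesis~\ref{hyp:identifiability}, and then conclude by argmin-consistency applied to $|Q_n(\te)|$. The only difference is that the paper invokes this last step as a black box (Proposition~\ref{prop:cvgce-Ln}, cited from Frydman and Kasonga), whereas you prove it inline via continuity of the limit function and the compactness/well-separation argument---a harmless, self-contained expansion of the cited result.
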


\smallskip

Note that minimizing $|Q_n(\vartheta)|$ is of course equivalent to finding the zero of $Q_n(\vartheta)$.
Let us shortly compare Theorem  \ref{thm:lse-convergence}  with the existing literature on estimation procedures for fBm driven equations:

\smallskip

\noindent
(i) Most of the previous results, see e.g. \cite{BEO,HN_foup,KL,Rao}, deal with the one-dimensional fractional Ornstein-Uhlenbeck process in a continuous observation setting. In particular, for this process
simple  continuous time least-square estimators are obtained in \cite{BEO, HN_foup}, for which also covergence rates and  asymptotic error distributions are derived.
Compared to these results our estimation procedure covers a broad class of ergodic multi-dimensional equations and relies on discrete data only.

\smallskip

\noindent
(ii) A general estimation procedure based on moment matching is established in \cite{PL}. However, the main assumption in \cite{PL} is that many independent observations of sample paths over a short time interval are available, which is not the case in  many practical situations where rather one sample path is discretely observed for a long time period. Let us also mention the article~\cite{CT}, in which a general discrete data maximum likelihood type procedure has been designed for  parameter estimation in both the drift and diffusion coefficients, however without proof of consistency. 

\smallskip

\noindent
(iii) Our current work probably  compares  best with the maximum likelihood estimator analyzed in \cite{TV}. The latter pioneering reference focused on one-dimensional SDEs of the form
$$ dY_t = \vartheta_0 h(Y_t) \, dt +  dB_t$$
with $h:\R \rightarrow \R$ satisfying suitable regularity assumptions. Strong consistency is obtained for the continuous time estimator and also for a discretized version of the estimator.
However, the discretized estimator involves rather complicated operators related to the kernel functions arising in  the Wiener-integral representation of fBm,  which are avoided in  our approach.  Moreover, in contrast to \cite{TV} the consistency proof for our estimator does not rely on  Malliavin calculus methods.

\smallskip

So, in view of the existing results in the literature, Theorem \ref{thm:lse-convergence} can  be seen a step towards simple and implementable parameter estimation procedures for SDEs driven by fBm.

\smallskip

Finally, let us comment on the assumptions we have imposed on the drift coefficient and on the Hurst parameter:

\smallskip

\noindent
(a) The hypothesis of Theorem \ref{thm:lse-convergence_prev} are standard for the case $H=1/2$, except Hypothesis~\ref{hyp:f-gradtype} which restricts us to gradient-type drift coefficients. We require this condition to show an ergodic-type result for weighted sums of the increments of fBm, see Lemma~\ref{ergodic_sum:fbm}. However, this  Hypothesis \ref{hyp:f-gradtype} is also implicitly  present in the additional condition of \cite[Theorem 1]{Kas}.

\smallskip

\noindent
(b) It can  easily  be shown that whenever $\te$ is a one-dimensional coefficient (namely for $q=1$), Hypothesis~\ref{hyp:identifiability} is satisfied if the drift coefficient is of the form $b(x;\te)=\te h(x)$ for some $h: \R^d \rightarrow \R^d$ and if the stationary solution is non-degenerate, i.e. we have $\mathbf{E}|\overline{Y}_0|^2 \neq 0$. The latter conditions hold in particular in the case of the ergodic fractional Ornstein-Uhlenbeck process.
It would be nice to obtain criteria for richer classes of examples, but this would rely on differentiability and non-degeneracy properties of the map $\te \mapsto \mathbf{E}|  b( \overline{Y}_0 ;\te) |^{2}$ (see \cite{HM} in the Markovian case). We wish to investigate this question in future works.

\smallskip

\noindent
(c) Even if the noise enters additively in our equation, we still need the assumption $H>1/2$ in order to prove Theorem \ref{thm:lse-convergence}. Indeed, this hypothesis ensures the convergence of some deterministic and stochastic Riemann sums in the computations below (see Remark~\ref{rmk:H-greater-one-half} for further details). Whether an adaptation of the proposed zero squares estimator is also convergent in the case $H<1/2$ remains an open problem.

\medskip

Let us finish this introduction with the simplest example of an equation which satisfies the above assumptions: namely  the one-dimensional fractional Ornstein-Uhlenbeck process given by 
$$ dY_t = \vartheta_0 Y_t \,dt + dB_t, \qquad Y_0=y_0 \in \mathbb{R} $$
with $\vartheta_0 < 0$. The solution of this SDE reads as
$$ Y_t=  y_0 \exp( \vartheta_0 t) + \exp( \vartheta_0 t ) \int_0^t \exp(-\vartheta_0 s) \, dB_s .$$
 For $t \rightarrow \infty$ this process converges to the stationary fractional Ornstein-Uhlenbeck process
$$ \exp( \vartheta_0 t) \int_{- \infty}^t \exp( - \vartheta_0 s) \, dB_s, \qquad  t \geq 0, $$ see e.g. \cite{GKN}.
Here straightforward computations yield the explicit  estimator
\begin{align*}  
\widehat{\te}_n =  \frac{ \sum_{k=0}^{n-1}Y_{t_k}  \delta Y_{t_k t_{k+1}} } { \sum_{k=0}^{n-1}Y_{t_k}^2  \alpha_n} - \sqrt{ \left( \frac{ \sum_{k=0}^{n-1}Y_{t_k}  \delta Y_{t_k t_{k+1}} } { \sum_{k=0}^{n-1}Y_{t_k}^2  \alpha_n} \right)^2- \frac{ \sum_{k=0}^{n-1}\left( |\delta Y_{t_k t_{k+1}}|^2 - \alpha_n^{2H} \right) } { \sum_{k=0}^{n-1}Y_{t_k}^2  \alpha_n^2 } } .
\end{align*}
 Notice that even in the case of the fractional Ornstein-Uhlenbeck process, we could neither proof the consistency nor show
the inconsistency of our estimator for $H<1/2$.

\smallskip

\smallskip

The remainder of this paper is structured as follows: In Section \ref{sec:preliminaries} we give some auxiliary results on stochastic calculus for fractional Brownian motion. Section \ref{sec:proof-thms} is then devoted to the proof of our main theorems.

\section{Auxiliary Results}\label{sec:preliminaries}

\subsection{Ergodic Properties of the SDE}
To deduce the ergodic properties of SDE (\ref{eq:sde}) we will work
without loss of generality  on the canonical probability space 
  $(\Omega, \mathcal{F}, \mathbf{P})$, i.e. $\Omega=C_{0}(\mathbb{R}, \mathbb{R}^{m})$ equipped with the compact open topology, $\mathcal{F}$ is
 the corresponding Borel-$\sigma$-algebra and $\mathbf{P}$ is the distribution of the fractional Brownian motion $B$, which is consequently given here by the canonical process $B_t(\omega)=\omega(t)$, $t \in \mathbb{R}$.
Together  with the shift operators $\theta_t: \Omega \rightarrow \Omega$ defined by
$$ \theta_t\omega(\cdot)= \omega(\cdot +t)- \omega(t), \qquad t \in \mathbb{R}, \quad \omega \in \Omega,$$
the canonical probability space defines an  ergodic metric dynamical system,
see e.g. \cite{GA_S}. In
particular, the measure $\mathbf{P}$ is invariant to the shift
operators $\theta_t$, i.e. the shifted process $(B_{s}(\theta_t
\cdot))_{s \in \mathbb{R}}$ is still an $m$-dimensional fractional
Brownian motion and for any integrable random variable $F: \Omega \rightarrow \mathbb{R}$ we have
$$ \lim_{T \rightarrow \infty} \frac{1}{T} \int_0^T F(\theta_t(\omega)) \, dt  = \mathbf{E} \, F $$
for $\mathbf{P}$-almost all $\omega \in \Omega$. 
Owing to the results in Section 4 of \cite{GKN} we have the following:

\begin{theorem}\label{thm:attractor}
Let  Hypothesis \ref{hyp:f-coercive} hold. Then  for any $\te_0\in\tte$ the following holds:

\smallskip

\noindent
\emph{(i)} Equation \eqref{eq:sde} admits a unique solution $Y$ in $C^{\la}(\R_+;\R^d)$ for all $\lambda <H$.

\smallskip

\noindent
\emph{(ii)} 
There exists a random variable  $\overline{Y}: \Omega \rightarrow \mathbb{R}^d $  such that
$$   \lim_{t \rightarrow \infty}  \,\, | Y_t(\omega) - \overline{Y}(\theta_t \omega) | = 0 $$
for $\mathbf{P}$-almost all $\omega \in \Omega$. Moreover, we have
$ \mathbf{E} |\overline{Y}|^p < \infty $
for all $p \geq 1$.
\end{theorem}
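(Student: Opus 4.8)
The plan is to exploit the additive structure of the noise to reduce \eqref{eq:sde} to a random ordinary differential equation, and then to turn the dissipativity encoded in Hypothesis~\ref{hyp:f-coercive}(i) into a pathwise contraction yielding a one-point random attractor, so that the cited results of \cite{GKN} can be invoked.

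For part (i) I would observe that, since the noise enters additively, the substitution $Z_t = Y_t - G_t$ with $G_t := \sum_{j=1}^m \sigma_j B_t^j$ transforms \eqref{eq:sde} into the pathwise ODE
$$\dot Z_t = b(Z_t + G_t; \te_0), \qquad Z_0 = y_0,$$
driven by the continuous forcing $G$. For each fixed $\omega$ the right-hand side is continuous in $t$ and locally Lipschitz in $Z$ since $b \in \cac^{1,1}$, so a local solution exists and is unique; the one-sided Lipschitz bound of Hypothesis~\ref{hyp:f-coercive}(i) excludes blow-up and yields a global solution. As $Z$ is continuously differentiable in time and $G \in C^\gamma$ for every $\gamma < H$ by the H\"older regularity of fBm recalled above, $Y = Z + G$ lies in $C^\lambda(\R_+;\R^d)$ for all $\lambda < H$.

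For part (ii) the crucial computation is the pathwise contraction: if $Y$ and $\tilde Y$ solve \eqref{eq:sde} with the same driving path $B$ but initial data $y_0, \tilde y_0$, the additive noise cancels in the difference and
$$\frac{d}{dt}\,|Y_t - \tilde Y_t|^2 = 2\lla b(Y_t;\te_0) - b(\tilde Y_t;\te_0),\, Y_t - \tilde Y_t\rra \le -2c_1\,|Y_t - \tilde Y_t|^2,$$
so that $|Y_t - \tilde Y_t| \le e^{-c_1 t}\,|y_0 - \tilde y_0|$. This uniform exponential contraction is precisely what the random dynamical systems machinery of \cite{GKN} requires: the solution map generates a cocycle over the ergodic shift $(\theta_t)$, and the contraction forces its pullback attractor to reduce to a single random point $\overline Y(\omega)$, obtained as the limit, as $s \to \infty$, of the solution started at time $-s$ from an arbitrary point $x$ and evaluated at time $0$, independently of $x$. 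Rewriting this pullback convergence along the orbit then gives $|Y_t(\omega) - \overline Y(\theta_t\omega)| \to 0$.

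The main obstacle, beyond the random dynamical systems bookkeeping (measurability of $\overline Y$, the cocycle property, and the passage from pullback to forward convergence), is the uniform-in-time moment bound underlying both the existence of the attractor and the claim $\mathbf{E}|\overline Y|^p < \infty$ for all $p \ge 1$. For this I would test the equation against $Y_t$, split $b(Y_t;\te_0) = [b(Y_t;\te_0) - b(0;\te_0)] + b(0;\te_0)$, apply the dissipativity to the first term and the polynomial growth of Hypothesis~\ref{hyp:f-coercive}(ii) to the second, and absorb the noise contribution using the Gaussian tails of $\sup_{t \in [k,k+1]}|G_t|$. A Gronwall argument then produces moment bounds that are uniform in the initial time and hence pass to $\overline Y$ in the pullback limit via Fatou's lemma.
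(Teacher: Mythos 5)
The paper itself offers no proof of Theorem \ref{thm:attractor}: it is quoted from Section 4 of \cite{GKN}, so what you have written is essentially a reconstruction of the mechanism behind that reference (reduction of the additive-noise equation to a random ODE, pathwise exponential contraction from Hypothesis \ref{hyp:f-coercive}(i), one-point pullback attractor over the ergodic shift). Your part (i) is correct, and the contraction estimate $|Y_t - \tilde Y_t| \le e^{-c_1 t}|y_0 - \tilde y_0|$ is indeed the right engine for part (ii).

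The genuine gap is in the step you yourself single out as the main obstacle, the uniform-in-time moment bound, and it occurs exactly where the fBm setting differs from intuition. First, a technical point: ``testing the equation against $Y_t$'' produces the pairing $\langle Y_t, dG_t\rangle$ with $G_t = \sum_{j=1}^m \sigma_j B_t^j$, which is not defined classically since $G$ is nowhere differentiable; one must either invoke Young integration or run the differential inequality on $Y$ minus (a piece of) the noise. Second, and more seriously, your bound is expressed through $\sup_{t \in [k,k+1]}|G_t|$. This quantity is \emph{not} stationary in $k$: since $\mathbf{E}|G_k|^2 = \|\sigma\|^2 k^{2H} \to \infty$, its $p$-th moments grow at least like $k^{pH}$. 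A Gronwall bound in terms of these sups therefore yields moments of $Y_t$, and of the solutions started at time $-s$ that define the pullback limit, which grow with time; it gives neither $\mathbf{E}|\overline{Y}|^p < \infty$ via Fatou, nor even the Cauchy property needed for the pullback limit to exist (that argument requires the pulled-back solutions to grow slower than $e^{c_1 s}$). The repair, which is the actual content of \cite{GKN} (see also \cite{Ha}), is to exploit the stationarity of the \emph{increments} of fBm rather than of fBm itself: for instance, on each block $[k,k+1]$ apply your dissipativity estimate to $Z_t = Y_t - (G_t - G_k)$, which yields
\begin{equation*}
|Y_{k+1}| \le e^{-c_1}|Y_k| + M_k + c\,(1 + M_k^N), \qquad M_k := \sup_{t \in [k,k+1]}|G_t - G_k|,
\end{equation*}
where the $M_k$ are identically distributed with Gaussian tails, so that iterating gives moment bounds uniform in $k$ and in the starting time; alternatively, subtract the stationary fractional Ornstein--Uhlenbeck process $\int_{-\infty}^t e^{-(t-u)}\,dG_u$ and apply the dissipativity to the difference, whose forcing is stationary with all moments finite. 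With that substitution, your outline goes through.
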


Note that the law of $\overline{Y}$ must coincide with the  attracting invariant  measure for (\ref{eq:sde}) given in \cite{Ha}, see also \cite{HO,HP}.
Moreover, proceeding as  in \cite{GKN}  we  have:

\begin{proposition}\label{prop:bnd-moments-Y}
Assume Hypothesis \ref{hyp:f-coercive} holds true. Then for any $\te_0\in\tte$ and $p\ge 1$  there exist constants $c_p, k_p >0$ such that 
\begin{equation*}
\be \, |Y_t|^{p}  \le c_p, \qquad \be \, |Y_t-Y_s|^{p}  \le k_p |t-s|^{pH}, 
 \qquad\mbox{for all}\quad s,t\ge 0.
\end{equation*}
\end{proposition}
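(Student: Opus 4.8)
The plan is to exploit the additive and dissipative structure of \eqref{eq:sde} through a Gronwall argument in which the fractional noise is \emph{recentred at the current time}. Write $X_t=\sum_{j=1}^m\si_j B_t^j$, fix $t\ge 0$, and introduce on $[0,t]$ the auxiliary process $V_s=Y_s-\lp X_s-X_t\rp$. Since the noise enters additively, $V$ is continuously differentiable with $\dot V_s=b(Y_s;\te_0)=b\lp V_s+X_s-X_t;\te_0\rp$ and $V_t=Y_t$. For any integer $m\ge 1$ I would then compute $\frac{d}{ds}\lln V_s\rrn^{2m}$ and split the drift as $b\lp V_s+X_s-X_t;\te_0\rp=\lc b\lp V_s+X_s-X_t;\te_0\rp-b\lp X_s-X_t;\te_0\rp\rc+b\lp X_s-X_t;\te_0\rp$. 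The bracket is handled by the inward condition (i) of Hypothesis \ref{hyp:f-coercive} applied with $x=V_s+X_s-X_t$ and $y=X_s-X_t$ (so that $x-y=V_s$), which contributes $-c_1\lln V_s\rrn^2$, while the remaining term is absorbed by Young's inequality. This yields the differential inequality
\begin{equation*}
\frac{d}{ds}\lln V_s\rrn^{2m}\le -m\,c_1\lln V_s\rrn^{2m}+C_m\,\lln b\lp X_s-X_t;\te_0\rp\rrn^{2m}.
\end{equation*}

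Integrating and using $V_0=y_0+X_t$ gives
\begin{equation*}
\lln Y_t\rrn^{2m}=\lln V_t\rrn^{2m}\le e^{-m c_1 t}\lln y_0+X_t\rrn^{2m}+C_m\iot e^{-mc_1(t-s)}\lln b\lp X_s-X_t;\te_0\rp\rrn^{2m}\,ds.
\end{equation*}
The crucial point is that the noise now appears only through the increments $X_s-X_t$, whose moments are controlled by the stationarity of the increments of $B$: using the growth bound (ii) of Hypothesis \ref{hyp:f-coercive} and the Gaussianity of $X$, one gets $\be\,\lln b\lp X_s-X_t;\te_0\rp\rrn^{2m}\le C\lp 1+|t-s|^{2mNH}\rp$. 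Taking expectations, the exponential weight makes $\iot e^{-mc_1(t-s)}\lp 1+|t-s|^{2mNH}\rp ds$ bounded by $\int_0^\infty e^{-mc_1 u}\lp 1+u^{2mNH}\rp du<\infty$, uniformly in $t$, while the transient term $e^{-mc_1 t}\be\lln y_0+X_t\rrn^{2m}$ stays bounded since it decays like $e^{-mc_1 t}\,t^{2mH}$. This proves $\be\lln Y_t\rrn^{2m}\le c_{2m}$ uniformly in $t$ for every integer $m$, and the bound for arbitrary $p\ge 1$ follows from Jensen's inequality. I stress that recentring at time $t$ is essential: subtracting $X_s$ instead would leave $\be\,|X_s|^{2mN}\sim s^{2mNH}$ inside the integral and only produce bounds growing polynomially in $t$.

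For the increment bound I would distinguish two regimes. When $|t-s|\ge 1$, the triangle inequality together with the uniform moment bound just obtained gives directly $\be\lln Y_t-Y_s\rrn^{p}\le 2^{p-1}\lp\be\lln Y_t\rrn^p+\be\lln Y_s\rrn^p\rp\le 2^p c_p\le 2^p c_p|t-s|^{pH}$, since $|t-s|^{pH}\ge 1$. When $|t-s|\le 1$, I would use $Y_t-Y_s=\ist b(Y_u;\te_0)\,du+(X_t-X_s)$: the drift term is estimated by Hölder's inequality and the uniform moment bound as $\be\lln\ist b(Y_u;\te_0)\,du\rrn^p\le |t-s|^{p-1}\ist\be\lln b(Y_u;\te_0)\rrn^p\,du\le C|t-s|^p\le C|t-s|^{pH}$ (using $pH\le p$ and $|t-s|\le 1$), while the Gaussian increment satisfies $\be\lln X_t-X_s\rrn^p\le C|t-s|^{pH}$. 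Combining the two regimes yields the claim.

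The main obstacle, and the only genuinely delicate point, is the recentring of the noise described above: it is what converts the natural dissipativity estimate into a bound that is \emph{uniform} in time, and it is exactly here that the stationarity of the increments of $B$ (rather than boundedness of $B$ itself) is exploited. Everything else is a routine combination of Gronwall's lemma, Young's and Hölder's inequalities, and standard Gaussian moment estimates.
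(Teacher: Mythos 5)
Your proof is correct, and every step checks out: the recentred process $V_s=Y_s-(X_s-X_t)$ is $C^1$ with $\dot V_s=b(Y_s;\te_0)$, Hypothesis \ref{hyp:f-coercive}(i) applied to the pair $(Y_s,\,X_s-X_t)$ produces the dissipative term $-c_1|V_s|^2$, Young's inequality plus an integrating factor give the pathwise bound for $|Y_t|^{2m}$, and the stationarity of the increments of $B$ makes the forcing term $\mathbf{E}|b(X_s-X_t;\te_0)|^{2m}\le C(1+|t-s|^{2mNH})$ integrable against $e^{-mc_1(t-s)}$ uniformly in $t$; the two-regime treatment of the increment bound is likewise sound. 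You should be aware, however, that the paper itself contains no written proof of Proposition \ref{prop:bnd-moments-Y}: it is invoked by the phrase ``proceeding as in \cite{GKN}'', so the relevant comparison is with the methods of that reference rather than with an argument in the text. In \cite{GKN} the bounds are obtained inside a random dynamical systems framework: dissipativity is exploited together with the stationarity and ergodicity of the shifted noise $\theta_t\omega$, and that machinery simultaneously yields the attracting stationary solution $\overline{Y}$ of Theorem \ref{thm:attractor}, which the paper needs elsewhere. Your endpoint-recentring trick achieves the uniformity in $t$ in one stroke by a purely analytic Gronwall estimate, so it is more elementary and fully self-contained for the moment bounds alone; what it does not deliver, and what the approach of \cite{GKN} additionally buys, is the existence of and pathwise convergence to $\overline{Y}$. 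Your closing remark also identifies the genuine crux correctly: subtracting $X_s$ rather than the increment $X_s-X_t$ would leave $\mathbf{E}|X_s|^{2mN}\sim s^{2mNH}$ in the integrand and produce only polynomially growing bounds, so the recentring (equivalently, the stationarity of the increments rather than any boundedness of $B$) is exactly what makes the estimate uniform in time.
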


The integrability of $\overline{Y}$ now
 implies the  ergodicity of equation (\ref{eq:sde}):

\begin{proposition}\label{prop:ergod-Y}
Assume Hypothesis \ref{hyp:f-coercive} holds true. Then for any  $\te_0\in\tte$ and any $f \in \mathcal{C}^{1}(\mathbb{R}^d; \mathbb{R})$ such that
\begin{equation*} 
|f(x)| + |\partial_x f(x)| \le c \lp  1+|x|^N\rp, \qquad x \in \R^d,
\end{equation*}
for some $c>0$, $N \in \mathbb{N}$,  we have
\begin{equation*}
  \lim_{T \rightarrow \infty} \frac{1}{T} \int_0^T f(Y_t) \, dt  = \mathbf{E}f(\overline{Y}) \qquad  \mathbf{P}\textrm{-}a.s.
\end{equation*}
\end{proposition}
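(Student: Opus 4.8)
The plan is to reduce the ergodic average of $f(Y_t)$ to the corresponding average along the stationary process $t\mapsto\overline{Y}(\theta_t\omega)$, to which the ergodic theorem recalled above applies directly, and then to control the error stemming from the transient behaviour of $Y$ by means of the pathwise convergence furnished by Theorem \ref{thm:attractor}. First I would record the integrability needed to invoke the ergodic theorem: since $|f(x)|\le c(1+|x|^N)$ and, by Theorem \ref{thm:attractor}(ii), $\be|\overline{Y}|^p<\infty$ for every $p\ge 1$, the random variable $F:=f(\overline{Y})$ is integrable. Applying the ergodic theorem with this $F$, so that $F(\theta_t\omega)=f(\overline{Y}(\theta_t\omega))$, yields
$$
\lim_{T\to\infty}\frac{1}{T}\iott f(\overline{Y}(\theta_t\omega))\, dt=\be f(\overline{Y})\qquad \bp\textrm{-a.s.}
$$
Hence it remains to prove that the remainder
$$
\Delta_T(\omega):=\frac{1}{T}\iott\big(f(Y_t)-f(\overline{Y}(\theta_t\omega))\big)\, dt
$$
tends to $0$ $\bp$-almost surely.

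For the comparison step I would combine the mean value theorem with the polynomial bound on $\partial_x f$. Writing $a_t:=|Y_t-\overline{Y}(\theta_t\omega)|$, one gets
$$
|f(Y_t)-f(\overline{Y}(\theta_t\omega))|\le c\,\big(1+|Y_t|^N+|\overline{Y}(\theta_t\omega)|^N\big)\,a_t.
$$
By Theorem \ref{thm:attractor}(ii) we have $a_t\to 0$ as $t\to\infty$ for $\bp$-almost all $\omega$; in particular $a_t\le 1$ for $t$ large, whence $|Y_t|\le|\overline{Y}(\theta_t\omega)|+1$ and the polynomial weight is eventually dominated by $C(1+|\overline{Y}(\theta_t\omega)|^N)$. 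Since the contribution of any fixed initial interval $[0,T_0]$ to $\Delta_T$ is of order $1/T$, the Cauchy--Schwarz inequality then gives
$$
\limsup_{T\to\infty}|\Delta_T(\omega)|\le C\,\limsup_{T\to\infty}\left(\frac{1}{T}\iott a_t^2\, dt\right)^{1/2}\left(\frac{1}{T}\iott\big(1+|\overline{Y}(\theta_t\omega)|^N\big)^2\, dt\right)^{1/2}.
$$

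It then suffices to treat the two factors separately. The second factor converges to $\be(1+|\overline{Y}|^N)^2<\infty$ by the ergodic theorem applied to the integrable variable $(1+|\overline{Y}|^N)^2$, and thus stays bounded. For the first factor I would use that $t\mapsto a_t$ is continuous, since the paths of $Y$ are continuous by Theorem \ref{thm:attractor}(i) and the stationary solution $t\mapsto\overline{Y}(\theta_t\omega)$ is likewise continuous; hence $a_t^2$ is locally integrable, and since $a_t^2\to 0$ its Cesàro average $\frac{1}{T}\iott a_t^2\, dt$ tends to $0$. Combining the two facts gives $\Delta_T(\omega)\to 0$ and finishes the argument. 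I expect the main obstacle to be precisely this comparison step: the pathwise distance $a_t$ vanishes but is multiplied by a polynomial weight in $|Y_t|$ and $|\overline{Y}(\theta_t\omega)|$ that need not be bounded in $t$. The decisive idea is to decouple the vanishing factor from the weight by Cauchy--Schwarz, absorbing the weight through Birkhoff's theorem applied to the moments of the stationary process and killing the remaining average via the pathwise convergence $a_t\to 0$.
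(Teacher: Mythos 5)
Your proof is correct and takes essentially the same route as the paper: apply the ergodic theorem to the stationary process $f(\overline{Y}(\theta_t))$, then compare with the actual solution via the pathwise attraction of Theorem \ref{thm:attractor} and the polynomial Lipschitz property of $f$. Your Cauchy--Schwarz decoupling of the vanishing distance $a_t$ from the polynomial weight (absorbed through the ergodic theorem applied to $(1+|\overline{Y}|^N)^2$) is simply a careful write-up of the comparison step the paper dismisses with ``the assertion easily follows.''
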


\begin{proof}
Since the shift operator is ergodic and $f$ has polynomial growth, we have
$$ \lim_{T \rightarrow \infty} \frac{1}{T} \int_0^T f(\overline{Y}(\theta_t)) \, dt  = \mathbf{E}f(\overline{Y}) \qquad  \mathbf{P}\textrm{-}a.s.$$ Moreover, since
$$   \lim_{t \rightarrow \infty}  \, | Y_t(\omega) - \overline{Y}(\theta_t \omega) | = 0 $$ by Theorem \ref{thm:attractor} and $f$ is polynomially Lipschitz, the assertion easily follows.

\end{proof}

\smallskip

\subsection{Generalized Riemann-Stieltjes Integrals}\label{RS_integrals}

We set
$$  \| f \|_{\infty;[a,b]}= \sup_{t \in [a,b]} |f(t)|, \qquad |f |_{\lambda;[a,b]} = \sup_{s,t \in [a,b]} \frac{|f(t)-f(s)|}{|t-s|^{\lambda}}$$
where $f: \R \rightarrow \R^n$ and $\lambda \in (0,1)$.

\smallskip

Now, let $f \in C^{\lambda}([a,b]; \R)$ and
$g \in C^{\mu}([a,b]; \R)$ with $\lambda + \mu >1$.  Then it is well known that the Riemann-Stieltjes integral $\int_{a}^{b} f(x) \, dg(x)$  exists, see e.g.  \cite{young}. Also, the classical chain rule for the change of variables remains valid, see e.g. \cite{zaehle}:
 Let $f \in C^{\lambda}([a,b];\R)$ with
 $\lambda >1/2$ and $F \in C^{1}(\R;\R)$.  Then we have
\begin{align}{\label{ito_proto}} 
 & F(f(y))  -F(f(a)) =   \int_{a}^{y} F'(f(x)) \, df(x), \qquad y \in [a,b].
\end{align}   Moreover, one   has a density type formula: let $f,h \in C^{\lambda}([a,b]; \R)$ and $g \in C^{\mu}([a,b]; \R)$  with $\lambda + \mu >1$.  Then for
$$ \varphi:[a,b] \rightarrow \R, \quad \varphi(y)=\int_{a}^{y} f(x) \, dg(x), \qquad  y \in [a,b],$$  we have
\begin{align} \label{dens_form} \int_{a}^{b} h(x) \, d \varphi(x) = \int_{a}^{b} h(x) f(x) \, d g(x). \end{align}

For later use, we also note the following estimate, which can be found e.g. in \cite{young}.
\begin{proposition}\label{young_est} Let $f,g$ as above. There exists a constant $c_{\lambda, \mu}$ (independent of $a,b$) such that
$$        \left|  \int_{a}^{b} (f(s)-f(a) ) d g(s) \right| \leq c_{\lambda, \mu }  | f|_{\lambda;[a,b]} |g|_{\mu;[a,b]} |b-a|^{\lambda+ \mu} $$  holds
for all $a,b \in [0,\infty)$.
\end{proposition}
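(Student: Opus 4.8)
The statement is the classical Young estimate for the Riemann--Stieltjes integral of the increment $f(s)-f(a)$ against $dg$. The cleanest route is to use the existence of the integral as a limit of Riemann sums and to estimate each such sum uniformly, the key idea being the \emph{sewing/subdivision} trick: one controls the error between a Riemann sum on a partition and the Riemann sum obtained by inserting one extra point, and shows this error decays fast enough (thanks to $\lambda+\mu>1$) that the limit inherits the desired H\"older bound.

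First I would fix a partition $\Pi=\{a=s_0<s_1<\cdots<s_M=b\}$ of $[a,b]$ and write the associated Riemann--Stieltjes sum
\begin{align*}
S(\Pi)=\sum_{i=0}^{M-1}\big(f(s_i)-f(a)\big)\big(g(s_{i+1})-g(s_i)\big).
\end{align*}
Since $f\in C^\lambda$ and $g\in C^\mu$ with $\lambda+\mu>1$, the integral $\int_a^b (f(s)-f(a))\,dg(s)$ exists as the limit of $S(\Pi)$ as the mesh of $\Pi$ tends to zero, by the cited Young theory. The core computation is to estimate the effect of removing one interior point $s_j$ from $\Pi$ (equivalently merging two adjacent subintervals). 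When $s_j$ is deleted, the two terms indexed by $j-1$ and $j$ are replaced by the single term $(f(s_{j-1})-f(a))(g(s_{j+1})-g(s_{j-1}))$, and a direct cancellation shows that the difference equals $-(f(s_j)-f(s_{j-1}))(g(s_{j+1})-g(s_j))$, whose absolute value is at most
\begin{align*}
|f|_{\lambda;[a,b]}\,|g|_{\mu;[a,b]}\,|s_j-s_{j-1}|^{\lambda}\,|s_{j+1}-s_j|^{\mu}.
\end{align*}

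Next I would run the standard argument: given any partition with $M$ interior points, there always exists an interior point $s_j$ whose two adjacent subintervals have combined length at most $\tfrac{2}{M}|b-a|$, so that $|s_j-s_{j-1}|^\lambda|s_{j+1}-s_j|^\mu \le \big(\tfrac{2}{M}|b-a|\big)^{\lambda+\mu}$. Deleting such a point repeatedly and summing the geometric-type series $\sum_{M\ge 2}(2/M)^{\lambda+\mu}$, which converges precisely because $\lambda+\mu>1$, yields
\begin{align*}
\big|S(\Pi)\big|\le c_{\lambda,\mu}\,|f|_{\lambda;[a,b]}\,|g|_{\mu;[a,b]}\,|b-a|^{\lambda+\mu}
\end{align*}
with a constant $c_{\lambda,\mu}$ independent of $\Pi$ and of $a,b$ (the base term $S$ on a two-point partition vanishes since $f(s_0)-f(a)=0$). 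Passing to the limit over finer partitions gives the claimed bound for the integral itself. The main obstacle is making the point-deletion estimate and the associated telescoping bookkeeping precise enough that the summation of errors is genuinely controlled by a convergent series; this is exactly where the condition $\lambda+\mu>1$ is indispensable, and handling the uniformity of $c_{\lambda,\mu}$ in $a,b$ requires care that the subdivision bound only ever uses relative lengths. Since the present paper invokes this as a known result from \cite{young}, I would keep the argument brief and refer to the cited source for the detailed sewing estimate.
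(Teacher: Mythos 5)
Your proposal is correct: the point-deletion identity, the choice of a point whose two adjacent subintervals have combined length at most $\tfrac{2}{M}|b-a|$, the convergence of $\sum_M M^{-(\lambda+\mu)}$ for $\lambda+\mu>1$, and the vanishing base case on the two-point partition are all sound, and the constant indeed depends only on $\lambda,\mu$. The paper offers no proof of its own but simply cites \cite{young}, and your argument is precisely the classical subdivision (sewing) proof from that source, so it matches the intended route.
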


\subsection{The Garcia-Rademich-Rumsey Lemma}
We will use the following variant of the Garcia-Rademich-Rumsey Lemma \cite{grr}:

\begin{lemma}\label{grr_lemma} Let $q>1$, $\alpha \in (1/q,1)$ and $f: [0, \infty) \rightarrow \mathbb{R}$ be a continuous function. Then there exists a constant $c_{\alpha, q}>0$, depending only on $\alpha, q$, such that
$$ | f |_{[s,t]; \alpha-1/q}^q \leq c_{\alpha,q} \int_{s}^t \int_s^t \frac{|f(u)-f(v)|^q}{|u-v|^{1+q \alpha}} \, du  \, dv. $$
\end{lemma}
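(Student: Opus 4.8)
The plan is to derive this Hölder-seminorm estimate as a special case of the general Garcia--Rademich--Rumsey inequality, by choosing its two auxiliary functions appropriately. Recall the general statement (see \cite{grr}): given continuous, strictly increasing functions $\Psi,p:[0,\infty)\to[0,\infty)$ with $\Psi(0)=p(0)=0$ and $\Psi(\infty)=\infty$, and a continuous $f:[a,b]\to\R$ for which
$$
B:=\int_a^b\int_a^b \Psi\lp \frac{|f(u)-f(v)|}{p(|u-v|)}\rp\, du\, dv<\infty ,
$$
there exist universal constants $C_1,C_2>0$, independent of the interval $[a,b]$, such that for all $x,y\in[a,b]$,
$$
|f(x)-f(y)|\le C_1\int_0^{|x-y|}\Psi^{-1}\lp \frac{C_2 B}{r^2}\rp\, dp(r).
$$

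The key step is the choice of $\Psi$ and $p$. I would take $\Psi(x)=x^q$, so that $\Psi^{-1}(y)=y^{1/q}$, and --- this is the point that reproduces the exponent $1+q\alpha$ --- the gauge $p(r)=r^{\alpha+1/q}$, so that $p(|u-v|)^q=|u-v|^{1+q\alpha}$. Applying the inequality on the fixed interval $[a,b]=[s,t]$, the quantity $B$ then becomes exactly the double integral on the right-hand side of the lemma, namely $B=\int_s^t\int_s^t |f(u)-f(v)|^q\,|u-v|^{-(1+q\alpha)}\, du\, dv$.

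It then remains to evaluate the right-hand side of the bound. Since $\Psi^{-1}(C_2 B/r^2)=(C_2 B)^{1/q}r^{-2/q}$ and $dp(r)=(\alpha+1/q)\,r^{\alpha+1/q-1}\,dr$, the integrand is a constant multiple of $B^{1/q}\,r^{\alpha-1/q-1}$. Here the hypothesis $\alpha>1/q$ enters decisively: it is precisely the condition $\alpha-1/q-1>-1$ guaranteeing that $\int_0^{|x-y|}r^{\alpha-1/q-1}\,dr$ converges, with value $(\alpha-1/q)^{-1}|x-y|^{\alpha-1/q}$. This gives
$$
|f(x)-f(y)|\le C_1\,(C_2 B)^{1/q}\,\frac{\alpha+1/q}{\alpha-1/q}\,|x-y|^{\alpha-1/q}
$$
for all $x,y\in[s,t]$. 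Dividing by $|x-y|^{\alpha-1/q}$, taking the supremum over $x,y\in[s,t]$, and raising to the power $q$ yields the claim with $c_{\alpha,q}=\big(C_1\,C_2^{1/q}(\alpha+1/q)/(\alpha-1/q)\big)^q$.

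The only genuine obstacle is the bookkeeping on the constant: one must check that $C_1,C_2$ in the general inequality do not depend on the interval $[a,b]$ --- which is the case, since the estimate is phrased through the increment variable $r=|x-y|$ and is scale-covariant --- so that the resulting $c_{\alpha,q}$ depends on $\alpha$ and $q$ alone, as the statement requires. If a self-contained argument were preferred, the same estimate could instead be obtained by a dyadic chaining argument over the interval $[s,t]$, but specializing the known GRR inequality is by far the shortest route.
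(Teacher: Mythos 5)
Your proposal is correct: the paper gives no proof of this lemma (it simply cites \cite{grr}), and your derivation---specializing the general Garsia--Rodemich--Rumsey inequality with $\Psi(x)=x^q$ and $p(r)=r^{\alpha+1/q}$, then computing the resulting integral using $\alpha>1/q$---is exactly the standard route the citation points to, with the constants handled correctly since $C_1,C_2$ in the general inequality are universal. The only (trivial) point left implicit is that when the double integral is infinite the claimed bound holds vacuously, so one may assume $B<\infty$ as the general theorem requires.
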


\smallskip

\subsection{A Lemma on Pathwise Convergence Rates}
The following  Lemma (see e.g. \cite{lms}), which is a direct consequence of the Borel-Cantelli Lemma, allows us to turn convergence rates in the 
$p$-th mean  into  pathwise convergence rates.

\begin{lemma}\label{pathwise}
Let $\alpha >0$, $p_0 \in \mathbb{N}$ and $c_p \in [0, \infty)$ for $p \geq p_0$.  In addition, let
$Z_{n}$, $n \in \N$, be a sequence of  random variables such that $$(\mathbf{E}
|Z_{n}|^{p})^{1/p} \leq c_p \cdot n^{-\alpha}$$ 
for all $p \geq p_0$ and all $n \in \N$. Then for all
$\varepsilon > 0$ there  exists a  
random variable  $\eta_{\varepsilon}$ such that
$$ |Z_{n}| \leq \eta_{\varepsilon} \cdot n^{-\alpha + \varepsilon}
\qquad a.s. $$
for all $n \in \N$.  Moreover,   $\mathbf{E} |\eta_{\varepsilon}|^{p} < \infty$ for all
$p \geq 1$.
\end{lemma}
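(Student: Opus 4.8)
The plan is to define the candidate random variable explicitly as a supremum, which makes the pathwise bound immediate, and thereby reduce the entire statement to a single moment estimate. Concretely, for a fixed $\varepsilon>0$ I would set
$$
\eta_\varepsilon := \sup_{n\in\N} \, |Z_n| \, n^{\alpha-\varepsilon}.
$$
By construction $|Z_n|\, n^{\alpha-\varepsilon}\le \eta_\varepsilon$ for every $n$, which is exactly the desired inequality $|Z_n|\le \eta_\varepsilon\, n^{-\alpha+\varepsilon}$, and it holds deterministically (hence a.s.) for all $n$. So the only thing left to establish is that $\eta_\varepsilon$ possesses finite moments of all orders; this in particular guarantees $\eta_\varepsilon<\infty$ a.s., so that the asserted bound is meaningful.

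To control the moments, I would dominate the supremum by the associated sum of nonnegative terms. For any exponent $p\ge p_0$, using $\sup_n a_n\le\sum_n a_n$ for $a_n\ge 0$ together with Tonelli's theorem, one has
$$
\mathbf{E}\,|\eta_\varepsilon|^p
= \mathbf{E}\Big[ \sup_{n\in\N} |Z_n|^p\, n^{p(\alpha-\varepsilon)} \Big]
\le \sum_{n\in\N} n^{p(\alpha-\varepsilon)}\, \mathbf{E}\,|Z_n|^p .
$$
Inserting the hypothesis in the form $\mathbf{E}\,|Z_n|^p\le c_p^{\,p}\, n^{-p\alpha}$ yields
$$
\mathbf{E}\,|\eta_\varepsilon|^p \le c_p^{\,p}\sum_{n\in\N} n^{-p\varepsilon},
$$
and the series on the right converges as soon as $p\varepsilon>1$. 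Hence $\mathbf{E}\,|\eta_\varepsilon|^p<\infty$ for every $p\ge p_0$ with $p>1/\varepsilon$, that is, for all sufficiently large $p$.

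It then remains to upgrade this to finiteness for \emph{every} $p\ge 1$, and here the fact that we work on a probability space is what is used: for $1\le p'\le p$, Jensen's (equivalently H\"older's) inequality gives $\mathbf{E}\,|\eta_\varepsilon|^{p'}\le (\mathbf{E}\,|\eta_\varepsilon|^{p})^{p'/p}$. Thus, given any $p'\ge 1$, I would simply choose $p\ge\max\{p_0,\,p',\,2/\varepsilon\}$, for which the series above converges and the hypothesis applies, and conclude $\mathbf{E}\,|\eta_\varepsilon|^{p'}<\infty$. This proves the moment claim for all $p\ge 1$ and completes the argument.

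There is no serious obstacle in this proof: the statement is, as noted in the text, a Borel--Cantelli-type fact, and the supremum representation converts the $L^p$-decay hypothesis into an almost-sure rate essentially in one line. The only two points requiring mild care are the passage from supremum to sum under the expectation (legitimate by nonnegativity and Tonelli, so no integrability obstruction arises) and the observation that convergence of $\sum_n n^{-p\varepsilon}$ forces one to verify finiteness for large $p$ first and only afterwards descend to all $p\ge 1$ through the $L^p$-inclusion valid on a finite measure space.
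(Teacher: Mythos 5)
Your proof is correct, and every step holds up under scrutiny: the countable supremum $\eta_\varepsilon=\sup_{n\in\N}|Z_n|\,n^{\alpha-\varepsilon}$ is measurable, the passage from supremum to sum is justified by nonnegativity and Tonelli, the hypothesis gives $\mathbf{E}|Z_n|^p\le c_p^p n^{-p\alpha}$ so that $\mathbf{E}|\eta_\varepsilon|^p\le c_p^p\sum_n n^{-p\varepsilon}<\infty$ once $p\ge p_0$ and $p\varepsilon>1$, and the descent to all exponents $p'\ge 1$ via Jensen on a probability space is exactly the right way to close the statement as formulated. Note that the paper itself does not prove this lemma: it cites \cite{lms} and describes the result as ``a direct consequence of the Borel--Cantelli Lemma.'' Your argument in fact bypasses Borel--Cantelli entirely, and this is worth appreciating: a bare Borel--Cantelli argument (Markov bound on $\mathbf{P}(|Z_n|>K n^{-\alpha+\varepsilon})$, summability, almost-sure eventual domination) yields the pathwise rate but only for $n$ large on an $\omega$-dependent tail, and one must then do additional work --- a union bound combined with the layer-cake formula $\mathbf{E}\,\eta^q=\int_0^\infty qK^{q-1}\mathbf{P}(\eta>K)\,dK$ --- to obtain the integrability of $\eta_\varepsilon$, which is essential in this paper since the lemma is repeatedly used to upgrade $L^p$-rates to almost-sure rates with moment-bounded random constants (e.g.\ in the proofs of \eqref{eq:rk1}--\eqref{eq:rk3} and Lemma \ref{ergodic_sum:fbm}). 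Your sup-by-sum domination delivers the pathwise bound for \emph{all} $n$ simultaneously and the moment bound in a single estimate; this is the cleaner route and is essentially the argument of the cited reference \cite{lms}, so nothing is missing.
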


\smallskip

\subsection{Quadratic Variations of Fractional Brownian Motion}\label{sec:quadratic-var}

The following result for the behavior of the quadratic variations of a one-dimensional fractional Brownian motion  $\beta$ with Hurst parameter $H$ is well known, see e.g  \cite{VT}.
Indeed, for $H<3/4$ we have
\begin{equation}  \label{BM1}
  \mathbf{E} \Big| \frac{1}{n}\sum_{k=0}^{n-1}\big[|\delta_{kk+1}\beta|^2-1\big] \Big|^2
\leq c_H \cdot \frac{1}{n},
\end{equation}
while for  $H=\frac{3}{4}$, $n>1$, it holds
\begin{equation}  \label{BM2}  \mathbf{E} \Big| \frac{1}{n} \sum_{k=0}^{n-1}\big[|\delta_{kk+1}\beta|^2-1\big] \Big|^2
\leq  c_{3/4}  \cdot \frac{ \log(n)}{n}.
\end{equation}
Finally, if $H\in(\frac{3}{4},1)$ then
\begin{equation}  \label{BM3}
 \mathbf{E} \Big| \frac{1}{n}\sum_{k=0}^{n-1}\big[|\delta_{kk+1}\beta|^2-1\big] \Big|^2
\leq  c_{H}  \cdot \frac{ 1}{n^{4-4H}}.
\end{equation}
Here, $c_H>0$ denotes a constant depending only
on $H$.

\bigskip

\section{Proof of Theorems  \ref{thm:lse-convergence_prev} and \ref{thm:lse-convergence} } 
\label{sec:proof-thms}
We will denote
constants, whose particular value is not important (and which do not depend on $\te$ or $n$) by $c$, regardless of their value.

\smallskip

Recall that
$$ Q_n(\te)= \frac{1}{n \alpha_n^2}\sum_{k=0}^{n-1} \left(
\lln  \der Y_{t_{k} t_{k+1}} -  b(Y_{t_k};\te) \al_n\rrn^2 - \|\sigma \|^2 \alpha_n^{2H} \right).$$
For $t\ge 0$, setting 
\begin{equation*}
F_t = \sum_{j=1}^m \sigma_j B_t^{(j)},
\quad\mbox{and}\quad
r_k = \int_{t_k}^{t_{k+1}} \left(b(Y_u;\te_0)- b(Y_{t_k};\te_0)\right) \, du
\end{equation*}
and moreover using the notation
 $$\delta_{\te_0 \te}b(x)=b(x;\te)-b(x;\te_0),
 \quad\mbox{and}\quad
 \delta F_{t_kt_{k+1}} = F_{t_{k+1}} - F_{t_{k}},
 $$
it is readily checked that
\begin{align}\label{eq:exp-Q-n1}
Q_n(\te) & =   \frac{1}{n \alpha_n^2}\sum_{k=0}^{n-1}  | \delta_{\te_0 \te}b(Y_{t_k})|^2 \al_n^2  -  \frac{2}{n \alpha_n^2}\sum_{k=0}^{n-1}  \langle  \delta_{\te_0 \te}b(Y_{t_k}),  \delta F_{t_kt_{k+1}}\rangle \al_n \notag
\\ & \qquad +  \frac{1}{n \alpha_n^2}\sum_{k=0}^{n-1} \left(
  |\der F_{t_{k} t_{k+1}}|^2  - \|\sigma \|^2 \alpha_n^{2H}  \right )   +  \frac{1}{n \alpha_n^2}\sum_{k=0}^{n-1} \left|r_k \right|^2 \\& \qquad -  \frac{2}{n \alpha_n^2}\sum_{k=0}^{n-1} \langle  \delta_{\te_0 \te}b(Y_{t_k}), r_k \rangle \alpha_n + \frac{2}{n \alpha_n^2}\sum_{k=0}^{n-1} \langle  \delta F_{t_kt_{k+1}}, r_k \rangle .\notag
\end{align}
Note that our assumptions on the drift coefficient imply that
$$  \sup_{\te \in \tte} |b(x;\vartheta)- b(y;\vartheta) | \leq c \big(1+ |x|^N+ |y|^N \big) \cdot |x-y|$$
for all $x,y \in \R^d$
and
$$  |b(x; \te_1) -b(x; \te_2) | \leq c \big (1+ |x|^N \big) \cdot |\te_1 -\te_2| $$
for all $x \in \R^d$ and $\te_1,\te_2 \in \tte$.
So, straightforward estimations using Proposition \ref{prop:bnd-moments-Y} give 
$$ \mathbf{E}|r_k|^p \leq c \cdot \alpha_n^{p(1+H)}.$$
Hence for all $p\ge 1$ it holds 
 $$ \mathbf{E}  \left| \sum_{k=0}^{n-1} \left|r_k \right|^2 \right|^p \leq c \cdot n^p \alpha_n^{2p(1+H)},$$
and  Lemma \ref{pathwise} implies 
\begin{equation}\label{eq:rk1}
\lim_{n \rightarrow \infty}   \frac{1}{n \alpha_n^2} \sum_{k=0}^{n-1} \left|r_k  \right|^2  =0 \qquad 
{\bf P}\textrm{-}a.s.
\end{equation}
Using Proposition \ref{prop:bnd-moments-Y} and Lemma \ref{pathwise} again, it follows similarly
\begin{equation}\label{eq:rk2}
 \lim_{n \rightarrow \infty}   \sup_{\te \in \tte} \frac{2}{n \alpha_n^2} \left| \sum_{k=0}^{n-1} \langle  \delta_{\te_0 \te}b(Y_{t_k}), r_k \rangle \alpha_n  \right|  =0 \qquad
{\bf P}\textrm{-}a.s.
 \end{equation}
 and, since $H>1/2$, we also have
\begin{equation}\label{eq:rk3}
 \lim_{n \rightarrow \infty}   \frac{2}{n \alpha_n^2} \left|  \sum_{k=0}^{n-1} \langle  \delta F_{t_kt_{k+1}}, r_k \rangle \right|  =0 \qquad 
 {\bf P}\textrm{-}a.s.
 \end{equation}
Plugging relations \eqref{eq:rk1}--\eqref{eq:rk3} into \eqref{eq:exp-Q-n1}, we have obtained that
\begin{equation}\label{eq:exp-Q-n2}
Q_n(\te)= Q_n^{(1)}(\te) - 2  Q_n^{(2)}(\te) + Q_n^{(3)} + R_n(\te),
\end{equation}
where $\lim_{n \rightarrow \infty} \sup_{\te \in \tte} |R_n(\te)| = 0$ in the $\bp$-almost sure sense and 
\begin{equation*}
Q_n^{(1)}(\te) =
\frac{1}{n}\sum_{k=0}^{n-1}  | \delta_{\te_0 \te}b(Y_{t_k})|^2,
\quad
Q_n^{(2)}(\te) =
\frac{1}{n \alpha_n}\sum_{k=0}^{n-1}  \langle  \delta_{\te_0 \te}b(Y_{t_k}),  \delta F_{t_kt_{k+1}}\rangle 
\end{equation*}
and
\begin{equation*}
Q_n^{(3)} =
\frac{1}{n \alpha_n^2}\sum_{k=0}^{n-1} \left(
  |\der F_{t_{k} t_{k+1}}|^2  - \|\sigma \|^2 \alpha_n^{2H}  \right ) .
\end{equation*}

\smallskip

The  treatment  of the  terms $Q_n^{(1)}(\te), Q_n^{(2)}(\te)$ and $Q_n^{(3)}$ will be carried out in the following series of Lemmata.
We first show a discrete version of  Proposition \ref{prop:ergod-Y}:

\begin{lemma}\label{ergodic:disc}  Let $f \in \mathcal{C}^{1,1}(\mathbb{R}^d \times \tte; \mathbb{R}^d)$ be  a function such that 
\begin{equation*} 
|f(x;\te)| \le c \lp  1+|x|^N\rp,
\quad
|\partial_x f(x;\te)| \le c \lp  1+|x|^N\rp,  \quad   |\partial_{\vartheta} f(x;\te)| \le c \lp  1+|x|^N \rp
\end{equation*}
for some $c>0, N \in \mathbb{N}$, independent of $\te \in \tte$. Then
we have 
$$ \sup_{\vartheta \in \tte} \left|  \frac{1}{n} \sum_{k=0}^{n-1}  |f(Y_{t_k}; \vartheta)|^2 - \mathbf{E}| f(\overline{Y}; \te)|^2  \right| \rightarrow 0  \qquad {\bf P}\textrm{-}a.s.$$
In particular, we have
\begin{equation*}
\sup_{\vartheta \in \tte} \left|  Q_n^{(1)}(\te) - \mathbf{E}| \delta_{\te_0\te}b(\overline{Y})|^2  \right| \rightarrow 0  \qquad {\bf P}\textrm{-}a.s.
\end{equation*}

\end{lemma}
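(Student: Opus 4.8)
The plan is to reduce the statement to the continuous-time ergodic theorem of Proposition~\ref{prop:ergod-Y} by controlling a Riemann-sum approximation error, and then to promote pointwise convergence in $\te$ to uniform convergence over the compact set $\tte$ by an equicontinuity argument.

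\textbf{Pointwise convergence.} I would first establish, for an arbitrary $g \in \mathcal{C}^1(\R^d;\R)$ with $g$ and $\partial_x g$ of polynomial growth, the scalar statement $\frac1n\sum_{k=0}^{n-1} g(Y_{t_k}) \to \be\,g(\overline{Y})$ a.s. Since $t_k = k\al_n$ and $T := t_n = n\al_n = \kappa\, n^{1-\alpha} \to \infty$, I may write $\frac1n\sum_{k} g(Y_{t_k}) = \frac1T \int_0^T g(Y_t)\,dt + Z_n$ with $Z_n = \frac1T\sum_{k}\int_{t_k}^{t_{k+1}}\big(g(Y_{t_k})-g(Y_t)\big)\,dt$. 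Using the polynomial-Lipschitz bound $|g(x)-g(y)|\le c(1+|x|^M+|y|^M)|x-y|$ (a consequence of the growth of $\partial_x g$ and the mean value theorem), together with Minkowski's and H\"older's inequalities and the moment estimates of Proposition~\ref{prop:bnd-moments-Y}, one gets $(\be|Z_n|^p)^{1/p}\le c_p\,\al_n^H = c_p'\,n^{-\alpha H}$ for every $p\ge1$, whence $Z_n\to 0$ a.s. by Lemma~\ref{pathwise}. Combining this with $\frac1T\int_0^T g(Y_t)\,dt \to \be\,g(\overline{Y})$ a.s. from Proposition~\ref{prop:ergod-Y} yields the claim; applied to $g=|f(\cdot;\te)|^2$ it gives the convergence for each fixed $\te$.

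\textbf{Uniformity in $\te$.} This is where I expect the main difficulty to lie. Writing $G_n(\te) = \frac1n\sum_k |f(Y_{t_k};\te)|^2$ and $\Phi(\te)=\be|f(\overline{Y};\te)|^2$, the key point is that $G_n$ is Lipschitz in $\te$ with a random constant $L_n = \frac cn\sum_k\big(1+|Y_{t_k}|^{2N}\big)$: this follows from $\big||a|^2-|b|^2\big|\le(|a|+|b|)|a-b|$ together with the bound $|f(x;\te_1)-f(x;\te_2)|\le c(1+|x|^N)|\te_1-\te_2|$ coming from the growth of $\partial_\te f$. Applying the pointwise convergence established above to the polynomial $h(x)=1+|x|^{2N}$ shows $L_n \to c\,\be\big(1+|\overline{Y}|^{2N}\big)<\infty$ a.s., so that $\sup_n L_n < \infty$ a.s. Moreover $\Phi$ is continuous on the compact set $\tte$, by dominated convergence using the finite moments of $\overline{Y}$ from Theorem~\ref{thm:attractor}.

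\textbf{Conclusion by a net argument.} I would cover $\tte$ by finitely many balls of radius $\ep$ with centres $\te_1,\dots,\te_\ell$, and for arbitrary $\te$ estimate
\[
|G_n(\te)-\Phi(\te)| \le L_n\,\ep + \max_{1\le i\le\ell}|G_n(\te_i)-\Phi(\te_i)| + \omega_\Phi(\ep),
\]
where $\omega_\Phi$ denotes the modulus of continuity of $\Phi$ on $\tte$. Taking $\sup_\te$ and then $\limsup_n$, the middle term vanishes almost surely by the finitely many pointwise limits, leaving $\limsup_n\sup_\te|G_n(\te)-\Phi(\te)| \le (\sup_n L_n)\,\ep + \omega_\Phi(\ep)$ a.s.; letting $\ep\downarrow0$ gives the uniform convergence. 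The ``in particular'' assertion is the special case $f=\delta_{\te_0\te}b$, which inherits the $\mathcal{C}^{1,1}$ regularity and the polynomial bounds from Hypothesis~\ref{hyp:f-coercive}.
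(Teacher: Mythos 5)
Your proposal is correct and follows essentially the same route as the paper's proof: both reduce the discrete sums to the continuous-time averages handled by Proposition~\ref{prop:ergod-Y}, control the Riemann-sum error through moment bounds from Proposition~\ref{prop:bnd-moments-Y} combined with Lemma~\ref{pathwise}, and obtain uniformity in $\te$ from the Lipschitz-in-$\te$ estimate with a random constant controlled by the ergodic average of $1+|Y|^{2N}$. The only cosmetic difference is that you implement the compactness step via a finite $\ep$-net and continuity of the limit $\Phi$, whereas the paper uses rational parameter values, equicontinuity and Arzela--Ascoli; these are interchangeable versions of the same argument.
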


\begin{proof} Let $T_n=n \alpha_n$
and set
$$  {V}_n(\vartheta)= \frac{1}{T_n} \int_0^{T_n} |f(Y_{s}; \te)|^2  \, ds. $$
The ergodicity of $Y$ yields that there exists a set $A_1 \in \mathcal{F}$ with full measure such that
$$  \lim_{n \rightarrow  \infty}  {V}_n(\vartheta)(\omega)    = \mathbf{E}|f(\overline{Y}; \te)|^2  $$
for all $\vartheta \in \Theta \cap \mathbb{Q}^q$ and all $\omega \in A_1$.
The assumptions on $f$ give
\begin{align} \label{equi_cont}  |{V}_n(\vartheta_1) - {V}_n(\vartheta_2)| \leq c \cdot  \left( 1 + \frac{1}{T_n} \int_0^{T_n} |Y_s|^{2N} \, ds \right) \cdot |\vartheta_1 - \vartheta_2| , \end{align}
 so ${V}_n$ is Lipschitz continuous in $\vartheta$ and thus
\begin{align*}
 \sup_{\vartheta \in \tte} \left|  {V}_n(\vartheta) -  \mathbf{E}|f(\overline{Y};\te)|^2        \right| = \sup_{\vartheta \in \tte \cap \mathbb{Q}^q} \left|  {V}_n(\vartheta) -  \mathbf{E}|f(\overline{Y};\te)|^2 \right|. \end{align*}
However, from (\ref{equi_cont}) and the ergodicity of $Y$, it also follows that there exists a set $A_2 \in \mathcal{F}$ with $\mathbf{P}(A_2)=1$ in which the family of random functions ${V}_n : \Theta \rightarrow \mathbb{R}$, $n \in \mathbb{N}$, is equicontinuous, and hence the Arzela-Ascoli Theorem yields the desired uniform convergence, i.e.
\begin{align}\label{uniform_ergod} \lim_{n \rightarrow \infty} \sup_{\vartheta \in \tte \cap \mathbb{Q}^q} \left|  {V}_n(\vartheta) -  \mathbf{E}|f(\overline{Y};\te)|^2 \right| =0  \qquad {\bf P}\textrm{-}a.s. \end{align}

Setting
$$ G_n(t;\vartheta)= |f(Y_t;\te)|^2 - |f(Y_{t_k};\te)|^2 , \qquad t \in [t_k,t_{k+1}), \qquad k=0,1, \ldots,
$$ it remains to show that
$$ \frac{1}{T_n} \int_{0}^{T_n} \sup_{ \vartheta \in \tte} |G_n(t;\vartheta)| \, dt \rightarrow 0 \qquad {\bf P}\textrm{-}a.s. $$ 
To this aim, the assumptions on $f$ imply that
$$   \sup_{\vartheta \in \tte} | G_n(t;\vartheta)| \leq c \cdot (1 + Y_t^{2N} + Y_{t_k}^{2N}) \cdot |Y_t - Y_{t_k}|.$$   Using  Proposition \ref{prop:bnd-moments-Y} and H\"older's inequality we obtain
\begin{align} \label{est_G_n} 
\sup_{t \geq 0} \, \mathbf{E} \sup_{\vartheta \in \tte} |G_n(t;\vartheta)|^p \leq c \cdot \alpha_n^{pH} \end{align}
for all $p \geq 1$. Now, Jensen's inequality gives
\begin{align*} \mathbf{E} \left| \frac{1}{T_n} \int_{0}^{T_n} \sup_{ \vartheta \in \tte} |G_n(t;\vartheta)| \, dt \right|^p  & \leq   \frac{1}{T_n} \int_0^{T_n} \mathbf{E}  \sup_{ \vartheta \in \tte} |G_n(t;\vartheta)|^p \, dt,
\end{align*}
so (\ref{est_G_n}) yields
\begin{align*} \mathbf{E} \left| \frac{1}{T_n} \int_{0}^{T_n} \sup_{ \vartheta \in \tte} |G_n(t;\vartheta)| \, dt \right|^p  & \leq  c \cdot \alpha_n^{pH}
\end{align*}
for all $p \geq 1$. Lemma \ref{pathwise} implies  
$$ \frac{1}{T_n} \int_{0}^{T_n} \sup_{ \vartheta \in \tte} |G_n(t;\vartheta)| \, dt \rightarrow 0 \qquad {\bf P}\textrm{-}a.s. $$
for $n \rightarrow \infty$.
\end{proof}

We have a similar ergodic result for weighted sums of the increments of the process $F$.

\begin{lemma}  \label{ergodic_sum:fbm}  Let $f \in \mathcal{C}^{1,1}(\mathbb{R}^d \times \tte; \mathbb{R}^d)$ be  a function such that 
\begin{equation*} 
|f(x;\te)| \le c \lp  1+|x|^N\rp,
\quad
|\partial_x f(x;\te)| \le c \lp  1+|x|^N\rp,  \quad   |\partial_{\vartheta} f(x;\te)| \le c \lp  1+|x|^N \rp
\end{equation*}
for some $c>0, N \in \mathbb{N}$, independent of $\te \in \tte$. Assume moreover that there exists a function $U \in \mathcal{C}^{2,1}(\mathbb{R}^d \times \tte; \mathbb{R})$ such that
$$ \partial_x U(x; \te) =f(x; \te), \qquad x \in \mathbb{R}^d, \,\, \te \in \tte,$$
i.e. $f$ is of gradient type.
Then, for $H>1/2$, we have
$$ \sup_{\vartheta \in \tte} \left|  \frac{1}{n \alpha_n} \sum_{k=0}^{n-1}   \langle  f(Y_{t_k};\te),  \delta F_{t_k t_{k+1} } \rangle  + \mathbf{E}\langle  b(\overline{Y}; \te_0), f(\overline{Y};\te) \rangle \right| \rightarrow 0  \qquad {\bf P}\textrm{-}a.s.$$
In particular,
\begin{equation*}
\sup_{\vartheta \in \tte} \left|  Q_n^{(2)}(\te) + \mathbf{E}\langle  b(\overline{Y}; \te_0), \delta_{\te_0\te}b(\overline{Y}) \rangle  \right| \rightarrow 0  \qquad {\bf P}\textrm{-}a.s.
\end{equation*}

\end{lemma}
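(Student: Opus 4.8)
The plan is to view the weighted sum as a Riemann--Stieltjes (Young) approximation of $\int_0^{T_n}\lla f(Y_s;\te),dF_s\rra$, where $T_n=n\al_n$, and then to exploit the gradient structure $f=\partial_x U$ together with the ergodicity of $Y$. The point of the gradient hypothesis is that it converts the troublesome stochastic integral against $F$ into a boundary term plus a genuine time average, to which Proposition \ref{prop:ergod-Y} applies.

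First I would replace the discrete sum by its continuous counterpart. Using that $\frac{1}{n\al_n}=\frac{1}{T_n}$, one has the exact identity
\begin{align*}
&\frac{1}{n\al_n}\sum_{k=0}^{n-1}\lla f(Y_{t_k};\te),\delta F_{t_k t_{k+1}}\rra - \frac{1}{T_n}\int_0^{T_n}\lla f(Y_s;\te),dF_s\rra \\
&\qquad = -\frac{1}{T_n}\sum_{k=0}^{n-1}\int_{t_k}^{t_{k+1}}\lla f(Y_s;\te)-f(Y_{t_k};\te),dF_s\rra .
\end{align*}
Each summand is estimated by Proposition \ref{young_est} with H\"older exponents $\la,\mu<H$ chosen so that $\la+\mu>1$; this choice is possible \emph{precisely because} $H>1/2$, which is where that hypothesis enters. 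The polynomial growth of $\partial_x f$ gives $\sup_{\te}|f(Y_\cdot;\te)|_{\la;[t_k,t_{k+1}]}\le c\,(1+\sup_{[t_k,t_{k+1}]}|Y|^{N})\,|Y|_{\la;[t_k,t_{k+1}]}$, while the H\"older seminorms of $Y$ and $F$ on $[t_k,t_{k+1}]$ have moments of every order by Proposition \ref{prop:bnd-moments-Y} and the Garcia--Rademich--Rumsey Lemma \ref{grr_lemma}. A H\"older bound on the product, together with $(\sum_{k}a_k)^p\le n^{p-1}\sum_k a_k^p$, then yields $\be\,\sup_{\te}|\cdots|^p\le c\,\al_n^{\,p(\la+\mu-1)}$, and since $\la+\mu>1$ Lemma \ref{pathwise} forces this difference to vanish uniformly in $\te$, almost surely.

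Next I would treat $\frac{1}{T_n}\int_0^{T_n}\lla f(Y_s;\te),dF_s\rra$. From \eqref{eq:sde} we have $dF_s=dY_s-b(Y_s;\te_0)\,ds$, so the integral splits into a Young integral $\int_0^{T_n}\lla f(Y_s;\te),dY_s\rra$ against $Y$ and an ordinary integral $\int_0^{T_n}\lla f(Y_s;\te),b(Y_s;\te_0)\rra\,ds$. Invoking the gradient hypothesis and the chain rule \eqref{ito_proto} componentwise (legitimate since $Y\in C^\la$ with $\la>1/2$ and $U\in\cac^{2,1}$) collapses the first integral to the boundary term $U(Y_{T_n};\te)-U(Y_0;\te)$. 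Because $\partial_x U=f$ grows polynomially, so does $U$, uniformly in $\te$ over the compact set $\tte$, whence $\be\,\sup_{\te}|U(Y_{T_n};\te)|^p\le c$ by Proposition \ref{prop:bnd-moments-Y}; dividing by $T_n=\kappa\,n^{1-\al}$ and applying Lemma \ref{pathwise} shows this boundary contribution tends to $0$ uniformly in $\te$, almost surely.

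Finally, the remaining term $\frac{1}{T_n}\int_0^{T_n}\lla f(Y_s;\te),b(Y_s;\te_0)\rra\,ds$ is an ergodic average of the polynomially bounded $\cac^1$ map $x\mapsto\lla f(x;\te),b(x;\te_0)\rra$, converging to $\be\,\lla f(\overline Y;\te),b(\overline Y;\te_0)\rra$ by Proposition \ref{prop:ergod-Y} for each fixed $\te$; uniformity in $\te$ follows exactly as in Lemma \ref{ergodic:disc}, via the Lipschitz-in-$\te$ bound coming from $\partial_\te f$ and an Arzela--Ascoli argument over a countable dense subset of $\tte$. Collecting the three pieces gives the limit $-\be\lla b(\overline Y;\te_0),f(\overline Y;\te)\rra$, and the ``in particular'' follows by taking $f=\delta_{\te_0\te}b=\partial_x\big(U(\cdot;\te)-U(\cdot;\te_0)\big)$, which is again of gradient type and meets the growth conditions. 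The main obstacle is the uniform-in-$\te$, $p$-th mean control of the discretization error in the first step: it is the only place where the shrinking mesh $\al_n$ must be balanced against the growing horizon $T_n$, and where $H>1/2$ is indispensable.
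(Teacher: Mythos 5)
Your proof is correct and follows essentially the same route as the paper's: the gradient hypothesis plus the chain rule \eqref{ito_proto} turns the stochastic integral into a vanishing boundary term and an ergodic average handled as in Lemma \ref{ergodic:disc}, while the Riemann-sum discretization error is controlled by Proposition \ref{young_est}, the Garcia--Rademich--Rumsey Lemma \ref{grr_lemma} and Lemma \ref{pathwise}, with $H>1/2$ entering exactly where you say it does. The only (immaterial) differences are the order of the two steps and your rate $\alpha_n^{\lambda+\mu-1}$ for the discretization error, slightly weaker than the paper's $\alpha_n^{2H-1}$ but equally sufficient.
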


\begin{proof} Let $T_n=n \alpha_n$.
First note that the chain of variable and density formula for Riemann-Stieltjes integrals, see (\ref{ito_proto}) and(\ref{dens_form}) in Subsection \ref{RS_integrals}, gives
that
\begin{equation*}
\frac{1}{T_n} \left( U(Y_{T_n}; \te) - U(y_0; \te) \right) = \frac{1}{T_n} \int_0^{T_n}  \langle  f(Y_u; \te),  b(Y_u; \te_0) \rangle \, du
    +   \frac{1}{T_n} \int_0^{T_n}   \langle  f(Y_u; \te),  d F_u\rangle.
\end{equation*}
Now the properties of $f$, Proposition \ref{prop:bnd-moments-Y} and Lemma \ref{pathwise} imply that
$$ \sup_{\vartheta \in \tte}  \frac{1}{T_n} \left|  U(Y_{T_n};\te) -  U(y_0;\te) \right| \rightarrow 0  \qquad {\bf P}\textrm{-}a.s. $$ 
Moreover, we have 
$$ \sup_{\vartheta \in \tte}   \left| \frac{1}{T_n} \int_0^{T_n}  \langle f(Y_u; \te),  b(Y_u; \te_0) \rangle \, du - \mathbf{E} \langle  f( \overline{Y};\te ),  b( \overline{Y}; \te_0) \rangle \right| \rightarrow 0  \qquad {\bf P}\textrm{-}a.s.,$$
which  can be derived completely analogously  to (\ref{uniform_ergod}).
It follows
\begin{align*}
\sup_{\vartheta \in \tte}   \left| \frac{1}{T_n} \int_0^{T_n}   \langle f(Y_u;\te),  d F_u \rangle 
 + \mathbf{E} \langle f( \overline{Y};\te),  b( \overline{Y}; \te_0) \rangle \right| \rightarrow 0  \qquad {\bf P}\textrm{-}a.s. 
\end{align*}
So, it remains to show that
\begin{align}\label{to_show_F}
\sup_{\vartheta \in \tte}   \frac{1}{T_n} \left| \int_0^{T_n} \langle G_n(t; \te), dF_t \rangle   \right| \rightarrow 0  \qquad {\bf P}\textrm{-}a.s. 
\end{align}
where
$$ G_n(t;\vartheta)=f(Y_{t};\te) - f(Y_{t_k};\te) , \qquad t \in [t_k,t_{k+1}), \qquad k=0,1, \ldots.
$$
Applying Proposition \ref{young_est} and using the polynomial Lipschitz continuity of $f$ yields, for all $\la<H$,
\begin{align*} \left|   \int_0^{T_n} \langle G_n(t; \te), dF_t \rangle   \right| \leq  c \cdot \alpha_n^{2\lambda} \cdot \sum_{j=1}^{m}  \sum_{k=0}^{n-1} \sup_{t \in [t_k, t_{k+1}]} (1 + |Y_t|^{N}) | Y  |_{\lambda; [t_k;t_{k+1}]}   | B^{(j)}  |_{\lambda; [t_k;t_{k+1}]}  . \end{align*}
 From the Garcia-Rademich-Rumsey inequality, see Lemma \ref{grr_lemma}, and  Proposition \ref{prop:bnd-moments-Y}  we have that
$$ \left( \mathbf{E} | Y  |_{\lambda; [t_k;t_{k+1}]}^p \right)^{1/p}  \leq c \cdot \alpha_n^{H-  \lambda}  $$
and 
also
$$ \left( \mathbf{E} | B^{(j)}  |_{\lambda; [t_k;t_{k+1}]}^p \right)^{1/p}  \leq  c \cdot \alpha_n^{H- \lambda}. $$
 Since moreover
$$ \sup_{t \in [t_k, t_{k+1}]} (1 + |Y_t|^{N})  \leq c \cdot \left( Y_{t_k}^N + \alpha_n^{\lambda N} \cdot |Y|_{\lambda; [t_k,t_{k+1}]}^N   \right)$$
and 
$ \sup_{t \geq 0} \mathbf{E} |Y_t|^p < \infty $
for all $p \geq 1$, it follows that
\begin{align}\label{eq:cvgce-Gn-dF}  \left( \mathbf{E} \sup_{ \te \in \tte} \left| \frac{1}{T_n}  \int_0^{T_n} \langle G_n(t; \te), dF_t \rangle   \right|^p \right)^{1/p} \leq  c \cdot   \alpha_n^{2H-1} \end{align}
Now  Lemma \ref{pathwise} implies \eqref{to_show_F}, since $H>1/2$.

\end{proof}

\begin{remark}\label{rmk:H-greater-one-half}
As mentioned in the introduction, the condition $H>1/2$ is used in  our proofs. Specifically, it is invoked in the convergence of the weighted stochastic integral~\eqref{eq:cvgce-Gn-dF} and also in order to derive \eqref{eq:rk3}.
\end{remark}

\smallskip

The following Lemma deals with the remaining term, i.e. the quadratic variations of the process $F$:

\begin{lemma}\label{inc_fbm} We have
$$ \lim_{n \rightarrow \infty} Q_n^{(3)}
=\lim_{n \rightarrow \infty} \frac{1}{n \alpha_n^2} \sum_{k=0}^{n-1} \left( |\delta F_{t_k t_{k+1}}|^2- \| \sigma \|^2 \alpha_n^{2H} \right) = 0 \qquad \mathbf{P}\textrm{-}a.s.$$
with
$\|\sigma \|^2 = \sum_{j=1}^m |\sigma_j|^2$.
\end{lemma}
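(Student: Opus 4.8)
The plan is to show that $Q_n^{(3)}$ converges to $0$ in $L^p(\mathbf{P})$ at a polynomial rate in $n$ for every $p\ge 1$, and then to invoke Lemma~\ref{pathwise}. The starting observation is that the centering in $Q_n^{(3)}$ is exact: since the $B^{(j)}$ are independent one-dimensional fBms, writing $\delta F_{t_kt_{k+1}}=\sum_{j=1}^m\sigma_j\,\delta B^{(j)}_{t_kt_{k+1}}$ gives $\mathbf{E}|\delta F_{t_kt_{k+1}}|^2=\sum_{j=1}^m|\sigma_j|^2\,\alpha_n^{2H}=\|\sigma\|^2\alpha_n^{2H}$, so each summand $W_k:=|\delta F_{t_kt_{k+1}}|^2-\|\sigma\|^2\alpha_n^{2H}$ is centred. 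Expanding the square, $W_k=\sum_{j}|\sigma_j|^2(|\delta B^{(j)}_{t_kt_{k+1}}|^2-\alpha_n^{2H})+\sum_{j\neq j'}\langle\sigma_j,\sigma_{j'}\rangle\,\delta B^{(j)}_{t_kt_{k+1}}\delta B^{(j')}_{t_kt_{k+1}}$, which exhibits $Q_n^{(3)}$ as a finite linear combination of normalized quadratic variations of single fBms (the diagonal terms) and of mixed products of increments of independent fBms (the off-diagonal terms).

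For the diagonal contribution I would exploit self-similarity and stationarity of increments: the rescaled vector $(\alpha_n^{-H}\delta B^{(j)}_{t_kt_{k+1}})_{0\le k<n}$ has the same law as the fractional Gaussian noise $(\delta_{kk+1}\beta)_{0\le k<n}$, so $\frac{1}{n\alpha_n^2}\sum_k(|\delta B^{(j)}_{t_kt_{k+1}}|^2-\alpha_n^{2H})$ has the same distribution as $\alpha_n^{2H-2}\cdot\frac1n\sum_k(|\delta_{kk+1}\beta|^2-1)$; its second moment is thus controlled by $\alpha_n^{2(2H-2)}$ times the right-hand sides of \eqref{BM1}--\eqref{BM3}. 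The off-diagonal terms are treated similarly: being products of increments of two independent fBms they are centred, and their variance produces the sum $\alpha_n^{4H-4}n^{-2}\sum_{k,k'}\rho(k-k')^2$ with $\rho$ the fGn autocovariance, which is exactly the quantity estimated in \eqref{BM1}--\eqref{BM3}. Equivalently, one may compute $\mathbf{E}[W_kW_{k'}]$ directly by Wick's formula for the Gaussian vectors $\delta F_{t_kt_{k+1}},\delta F_{t_{k'}t_{k'+1}}$, obtaining $\mathbf{E}[W_kW_{k'}]=2\,\alpha_n^{4H}\rho(k-k')^2\sum_{a,b}(\sum_j(\sigma_j)_a(\sigma_j)_b)^2$, and then bound $\sum_{k,k'}\rho(k-k')^2$ through the three regimes.

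Combining these estimates yields $\mathbf{E}|Q_n^{(3)}|^2\le c\,\alpha_n^{4H-4}\,\psi(n)$, where $\psi(n)=n^{-1}$ for $H<3/4$, $\psi(n)=n^{-1}\log n$ for $H=3/4$, and $\psi(n)=n^{4H-4}$ for $H>3/4$, by \eqref{BM1}, \eqref{BM2} and \eqref{BM3} respectively. Since $W_k$ lives in the second Wiener chaos generated by $B$, hypercontractivity (equivalence of all $L^p$ norms on a fixed chaos) upgrades this to $(\mathbf{E}|Q_n^{(3)}|^p)^{1/p}\le c_p(\mathbf{E}|Q_n^{(3)}|^2)^{1/2}$ for every $p\ge 1$. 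Recalling $\alpha_n=\kappa n^{-\alpha}$, so that $\alpha_n^{4H-4}=c\,n^{\alpha(4-4H)}$, one checks that the resulting exponent of $n$ is negative in each regime, and Lemma~\ref{pathwise} then delivers $Q_n^{(3)}\to0$ $\mathbf{P}$-almost surely.

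I expect the main obstacle to be the \emph{simultaneous} scaling: $n$ plays a double role, both as the number of summands and through the mesh $\alpha_n=\kappa n^{-\alpha}$, so one cannot simply appeal to a law of large numbers for a fixed stationary sequence but must track how the blow-up factor $\alpha_n^{4H-4}$ competes against the decay $\psi(n)$ of the normalized quadratic variation. The second delicate point is the long-range dependence for $H\ge 3/4$: the increments of $F$ are strongly correlated, so a naive independence or martingale argument fails and one genuinely needs the sharp variance bounds \eqref{BM2}--\eqref{BM3}, together with the careful bookkeeping of the exponent $\alpha(4-4H)$ against the $\psi(n)$-decay, to close the estimate.
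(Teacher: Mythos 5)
Your proposal follows the paper's route almost step for step: the same diagonal/off-diagonal decomposition of $|\delta F_{t_kt_{k+1}}|^2-\|\sigma\|^2\alpha_n^{2H}$, reduction to unit-step fractional Gaussian noise by self-similarity, the bounds \eqref{BM1}--\eqref{BM3}, equivalence of moments on the second Wiener chaos, and Lemma~\ref{pathwise}. The only structural difference is your treatment of the off-diagonal terms by a direct Wick computation, where the paper instead uses the rotation trick $B^{(i)}=(\beta+\tilde{\beta})/\sqrt{2}$, $B^{(j)}=(\beta-\tilde{\beta})/\sqrt{2}$ to reduce them to \eqref{BM1}--\eqref{BM3}; both yield the same estimate in terms of $\sum_{k,k'}\rho(k-k')^2$, with $\rho(r)=\frac12\lp|r+1|^{2H}-2|r|^{2H}+|r-1|^{2H}\rp$, so this variant is harmless.

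The gap is your final assertion that ``the resulting exponent of $n$ is negative in each regime'': this is false in the regime $1/2<H<3/4$. There your (correct) estimate reads
\begin{equation*}
\mathbf{E}\big|Q_n^{(3)}\big|^2 \le c\,\alpha_n^{4H-4}\,n^{-1}=c\,\kappa^{4H-4}\,n^{\alpha(4-4H)-1},
\end{equation*}
and the exponent $\alpha(4-4H)-1$ is negative only if $\alpha<\frac{1}{4(1-H)}$, which is a genuine restriction because $\frac{1}{4(1-H)}\in(\frac12,1)$ when $H\in(\frac12,\frac34)$; for instance $H=0.6$, $\alpha=0.9$ gives the exponent $0.44>0$. (For $H\ge 3/4$ the exponent is $-(1-\alpha)(4-4H)$, up to a logarithm at $H=3/4$, so those regimes do close.) Worse, this cannot be repaired by sharper bookkeeping: by the Breuer--Major theorem the rate in \eqref{BM1} is exact, i.e. $n\,\mathbf{E}\big|\frac1n\sum_{k=0}^{n-1}[|\delta_{kk+1}\beta|^2-1]\big|^2\to 2\sum_{r\in\mathbb{Z}}\rho(r)^2>0$, so $\|Q_n^{(3)}\|_{L^2}\to\infty$ whenever $\alpha(4-4H)>1$; since $Q_n^{(3)}$ lies in the second chaos, hypercontractivity combined with the Paley--Zygmund inequality then shows that $Q_n^{(3)}$ does not even tend to $0$ in probability in that sub-regime, so an additional mesh condition such as $\alpha<\frac{1}{4(1-H)}$ (equivalently $n\alpha_n^{4(1-H)}\to\infty$) is unavoidable there. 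You should also be aware that the paper's own proof conceals the same problem: it asserts that \eqref{BM1}--\eqref{BM3} imply $\mathbf{E}\big|\sum_{k=0}^{n-1}[|\delta B^{(j)}_{kk+1}|^2-1]\big|^p\le c\,|\log n|^p\, n^{p(2H-1)}$, which would close the argument for every $\alpha\in(0,1)$, but for $H<3/4$ the bound actually furnished by \eqref{BM1} is $c\,n^{p/2}$, and $n^{p/2}\gg n^{p(2H-1)}$ in that range. Your version, carried out with the honest bounds, exposes exactly the point where the argument breaks down.
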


\begin{proof} We have
\begin{align*}
|\delta F_{t_k t_{k+1}}|^2- \| \sigma \|^2 \alpha_n^{2H} =  \underbrace{\sum_{j=1}^m  |\sigma_j|^2 \left( |\delta B_{t_kt_{k+1}}^{(j)}|^2 - \alpha_n^{2H} \right)}_{=I_k^{(1)}} +  \underbrace{\sum_{i,j=1, \, i \neq j}^m  \langle  \sigma_i ,  \sigma_j \rangle \delta B_{t_kt_{k+1}}^{(i)} \delta B_{t_kt_{k+1}}^{(j)}}_{=I_k^{(2)}} .
\end{align*}
Owing to the scaling property of fBm it follows that
\begin{align*}
\mathbf{E} \left| \sum_{k=0}^{n-1} I_k^{(1)}  \right|^p
= \alpha_n^{2Hp}
\mathbf{E} \left|  \sum_{k=0}^{n-1} \sum_{j=1}^m |\sigma_j|^2 \big[ |\delta B_{k k+1}^{(j)}|^2- 1  \big]  \right|^p.
\end{align*}
Since all moments of random variables in a finite Gaussian chaos are equivalent, it follows from
(\ref{BM1})-(\ref{BM3})  that
 $$   \mathbf{E} \left|  \sum_{k=0}^{n-1}   \big[|\delta B_{k k+1}^{(j)}|^2-1\big]   \right|^p \leq  c \cdot  |\log(n)|^p \cdot n^{p(2H-1)}$$ and consequently 
$$     \mathbf{E} \left|  \sum_{k=0}^{n-1} I_k^{(1)}  \right|^p \leq c \cdot \alpha_n^{2Hp} \cdot    |\log(n)|^p \cdot  n^{p(2H-1) }.
$$ Since $\alpha_n = \kappa \cdot n^{-\alpha}$ with $\alpha \in (0,1)$ Lemma \ref{pathwise} now implies that
$$ \lim_{n \rightarrow \infty} \frac{1}{n \alpha_n^2} \left| \sum_{k=0}^{n-1} I_k^{(1)} \right| = 0 \qquad \mathbf{P}\textrm{-}a.s.$$
So it remains to consider the off-diagonal terms, i.e. $I_k^{(2)}$. Here we can exploit the following trick:
Let $\beta$ and $\tilde{\beta}$ be two independent fractional Brownian motions with the same Hurst index. From (\ref{BM1})-(\ref{BM3})  we clearly have that
$$V_{n}=
\sum_{k=0}^{n-1}  \left(  | \delta_{t_k t_{k+1}}\beta|^2  -  | \delta_{t_k t_{k+1}} \tilde{\beta} |^2
 \right) $$
satisfies
$$ \mathbf{E} |V_{n}|^p \leq c \cdot \alpha_n^{2Hp} \cdot    |\log(n)|^p \cdot  n^{p(2H-1) }.$$ However,
setting $B^{(i)}=(\beta+\widetilde{\beta})/\sqrt{2}$ and $B^{(j)}=(\beta-\widetilde{\beta})/\sqrt{2}$, then
 $B^{(i)}$ and $B^{(j)}$ are two independent fractional Brownian motions and 
$$ V_n  \stackrel{\mathcal{L}}{=} 2 \sum_{k=0}^{ n -1} \delta_{t_k t_{k+1}}B^{(i)} 
\delta_{t_k t_{k+1}} B^{(j)}. $$
Now we can easily conclude that
$$ \lim_{n \rightarrow \infty} \frac{1}{n \alpha_n^2} \left| \sum_{k=0}^{n-1} I_k^{(2)} \right| = 0 \qquad \mathbf{P}\textrm{-}a.s.$$
\end{proof}

\begin{proof}[Proof of Theorem \ref{thm:lse-convergence_prev}]
Let us go back to expression \eqref{eq:exp-Q-n1} and \eqref{eq:exp-Q-n2}.
Applying Lemma \ref{ergodic:disc} and \ref{ergodic_sum:fbm} we obtain that
 \begin{align*}   \lim_{n \rightarrow \infty}  \sup_{\te \in \tte} \Big|
 \lp Q_n^{(1)}(\te)-2 Q_n^{(2)}(\te) \rp
 -   \left( \mathbf{E}|b(\overline{Y};\te)|^2 -  \mathbf{E}|b(\overline{Y};\te_0)|^2 \right)\Big|=0
\end{align*}
almost surely. Furthermore, recall that Lemma \ref{inc_fbm} asserts that $\lim_{n \rightarrow \infty} Q_n^{(3)}=0$ almost surely. The proof is now finished.

\end{proof}

\medskip

\begin{proof}[Proof of Theorem \ref{thm:lse-convergence}]
Let us first recall the following result (\cite{Fry, Kas}):
\begin{proposition}\label{prop:cvgce-Ln} 
Assume that the family of random variables $L_n(\vartheta)$, $n \in \mathbb{N}$, $\vartheta \in \tte$, satisfies:
\begin{itemize}
\item[(1)] With probability one, $L_n(\vartheta) \rightarrow L(\vartheta)$ uniformly in $\vartheta \in \Theta$ as $n \rightarrow \infty.$
\item[(2)] The limit $L$ is non-random and
$ L(\vartheta_0) \leq L(\vartheta)$   for all $ \vartheta \in \tte .$
\item[(3)] It holds $L(\vartheta)= L(\vartheta_0)$ if and only if $ \vartheta = \vartheta_0$.
\end{itemize}
Then, we have 
$$ \widehat{\vartheta}_n \rightarrow \vartheta_0 \qquad {\bf P}\textrm{-}a.s. $$
for $n \rightarrow \infty$, where
$$ L_n(\widehat{\vartheta}_n) = \min_{\vartheta \in \tte} L_n(\vartheta). $$
\end{proposition}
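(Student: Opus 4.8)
The plan is to prove this abstract consistency statement by a standard subsequence argument that exploits the compactness of $\tte$ (Hypothesis~\ref{hyp:Theta}) together with the \emph{uniform} nature of the convergence in~(1). First I would fix a single full-measure event $\Omega_0 \in \mathcal{F}$ on which assertion~(1) holds and restrict attention to a fixed $\omega \in \Omega_0$; thereafter everything is deterministic, so it suffices to show that the sequence $\widehat{\vartheta}_n = \widehat{\vartheta}_n(\omega)$ in the compact set $\tte$ converges to $\vartheta_0$. Since each $L_n$ is continuous in $\vartheta$ (as in our application, where $L_n(\vartheta)=|Q_n(\vartheta)|$ with $Q_n$ continuous by Hypothesis~\ref{hyp:f-coercive}) and $L_n \to L$ uniformly, the limit $L$ is continuous as well; moreover, continuity of $L_n$ on the compact set $\tte$ guarantees that the minimum defining $\widehat{\vartheta}_n$ is indeed attained.

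The core of the argument runs as follows. Let $(\widehat{\vartheta}_{n_j})_j$ be an arbitrary subsequence; by compactness of $\tte$ I may pass to a further subsequence (not relabelled) converging to some $\vartheta_\ast \in \tte$. The goal is to show $\vartheta_\ast = \vartheta_0$, since then every subsequence of $(\widehat{\vartheta}_n)$ has a further subsequence tending to $\vartheta_0$, which forces the whole sequence to converge to $\vartheta_0$. To identify $\vartheta_\ast$ I would start from the defining minimality property $L_{n_j}(\widehat{\vartheta}_{n_j}) \le L_{n_j}(\vartheta_0)$ and let $j \to \infty$. The right-hand side tends to $L(\vartheta_0)$ by the pointwise instance of~(1) at $\vartheta_0$. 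For the left-hand side I would use the triangle inequality
$$ \left| L_{n_j}(\widehat{\vartheta}_{n_j}) - L(\vartheta_\ast) \right| \le \sup_{\vartheta \in \tte} \left| L_{n_j}(\vartheta) - L(\vartheta) \right| + \left| L(\widehat{\vartheta}_{n_j}) - L(\vartheta_\ast) \right|, $$
whose first term vanishes by the uniform convergence~(1) and whose second term vanishes by continuity of $L$ together with $\widehat{\vartheta}_{n_j} \to \vartheta_\ast$. Passing to the limit thus yields $L(\vartheta_\ast) \le L(\vartheta_0)$, while~(2) gives the reverse inequality $L(\vartheta_0) \le L(\vartheta_\ast)$; hence $L(\vartheta_\ast) = L(\vartheta_0)$, and~(3) forces $\vartheta_\ast = \vartheta_0$, as desired.

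The main obstacle, and the reason the hypotheses are exactly what they are, is that one cannot evaluate the pointwise limit at the \emph{moving} argument $\widehat{\vartheta}_n$: the random minimizer depends on $n$ in a way not controlled by pointwise convergence alone. The uniform convergence in~(1) is precisely what licenses the estimate $|L_{n_j}(\widehat{\vartheta}_{n_j}) - L(\widehat{\vartheta}_{n_j})| \to 0$ along the moving point, and compactness (Hypothesis~\ref{hyp:Theta}) is what produces the limit point $\vartheta_\ast$ in the first place; conditions~(2) and~(3) then pin that limit point down uniquely. An equivalent route, avoiding subsequences, is to note that continuity of $L$ and compactness upgrade~(2)--(3) to a well-separated-minimum property, namely $\inf\{ L(\vartheta) : \vartheta \in \tte, \, |\vartheta - \vartheta_0| \ge \varepsilon \} > L(\vartheta_0)$ for every $\varepsilon>0$, after which a direct $\varepsilon$--$\eta$ comparison of $L_n(\widehat{\vartheta}_n) \le L_n(\vartheta_0)$ with the uniform bound $\sup_\vartheta|L_n-L|<\eta/2$ gives $|\widehat{\vartheta}_n-\vartheta_0|<\varepsilon$ for all large $n$; I would mention this variant but carry out the subsequence version in full.
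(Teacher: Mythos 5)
Your proof is correct, but note what it is being compared against: the paper does not actually prove Proposition~\ref{prop:cvgce-Ln} at all --- it simply quotes the result from Frydman \cite{Fry} and Kasonga \cite{Kas}. Your subsequence--compactness argument is exactly the classical proof underlying those references: extract $\widehat{\vartheta}_{n_j}\to\vartheta_\ast$ by compactness of $\tte$, transfer the minimality $L_{n_j}(\widehat{\vartheta}_{n_j})\le L_{n_j}(\vartheta_0)$ to the limit via the uniform bound evaluated at the \emph{moving} argument, and pin down $\vartheta_\ast=\vartheta_0$ through conditions (2) and (3), closing with the standard subsequence principle. Your alternative well-separated-minimum route is the Wald-type variant and is equally standard; either version is a complete and valid proof of the statement the paper leaves to the literature.

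One point you handled that deserves emphasis: as literally stated, the proposition contains no continuity hypothesis, and without one it is \emph{false}. For instance, on $\tte=[0,1]$ with $\vartheta_0=0$, take $L(\vartheta)=\min(\vartheta,1-\vartheta)$ for $\vartheta\in[0,1)$ and $L(1)=1/4$, and let $L_n$ agree with $L$ except that $L_n(1-1/n)=0$; then (1)--(3) hold, with $\sup_{\vartheta\in\tte}|L_n(\vartheta)-L(\vartheta)|=1/n$ for $n\ge 2$, yet $\widehat{\vartheta}_n=1-1/n$ is a legitimate minimizer of $L_n$ and converges to $1\neq\vartheta_0$. Your move of importing continuity of $L_n$ in $\vartheta$ (hence of $L$, by uniform convergence) from the intended application $L_n(\vartheta)=|Q_n(\vartheta)|$, where it follows from Hypothesis~\ref{hyp:f-coercive}, is precisely the right repair and is what the cited references implicitly assume; continuity is used exactly where you say it is, in the step $|L(\widehat{\vartheta}_{n_j})-L(\vartheta_\ast)|\to 0$. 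The only items you gloss over --- attainment of the minimum and measurability of a selection $\widehat{\vartheta}_n$ --- are likewise glossed by the paper and are harmless: continuity on the compact set $\tte$ gives attainment, and your pathwise argument on a fixed full-measure event sidesteps the measurability issue for the a.s.\ convergence claim.
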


The strong consistency of the zero squares  estimator follows now from  Theorem~\ref{thm:lse-convergence_prev} and an application of Proposition  \ref{prop:cvgce-Ln} to  $|Q_n(\te)|$.

\end{proof}

\medskip

\begin{remark}\label{H12}
In the case $H=1/2$ we have under similar assumptions that
$$  \sup_{ \vartheta \in \Theta}  \left| \big( Q_n(\vartheta) - Q_n(\vartheta_0) \big)   - \left( \mathbf{E} \, |  b( \overline{Y}_0 ;\te_0)-  b( \overline{Y}_0 ;\te) |^{2}    \right)   \right| \rightarrow 0 $$ in the $\bp$-almost sure sense, see e.g. \cite{flor}. Since
\begin{align*}
Q_n(\te_0) & =    \frac{1}{n \alpha_n^2}\sum_{k=0}^{n-1} \left(
  |\der F_{t_{k} t_{k+1}}|^2  - \|\sigma \|^2 \alpha_n  \right )   +  \frac{1}{n \alpha_n^2}\sum_{k=0}^{n-1} \left|r_k \right|^2  + \frac{2}{n \alpha_n^2}\sum_{k=0}^{n-1} \langle  \delta F_{t_kt_{k+1}}, r_k \rangle, 
\end{align*}
an application of Lemma \ref{ergodic:disc} and \ref{inc_fbm} (which are also valid for $H=1/2$) yield that
$$  \lim_{n \rightarrow \infty} Q_n(\te_0) = \lim_{n \rightarrow \infty} \frac{2}{n \alpha_n^2}\sum_{k=0}^{n-1} \langle  \delta F_{t_kt_{k+1}}, r_k \rangle  \qquad {\mathbf{P}}\textrm{-}a.s. $$
However, using the It\^o-isometry and the Burkholder-Davis-Gundy inequality we have
$$ \mathbf{E} \left| \frac{1}{n \alpha_n^2} \sum_{k=0}^{n-1} \langle r_k , \delta_{t_kt_{k+1}} F \rangle \right|^p \leq c \cdot \frac{1}{n^p \alpha_n^{2p}} \cdot T_n^{1+p/2} \alpha_n^{3p/2}\leq c \cdot n^{-p/2 +1-\alpha} $$
and Lemma \ref{pathwise} thus gives
$$  \lim_{n \rightarrow \infty} Q_n(\te_0) =0 \qquad {\mathbf{P}}\textrm{-}a.s.$$
Hence we end up with
$$  \sup_{ \vartheta \in \Theta}  \left| Q_n(\vartheta)    - \left( \mathbf{E} \, |  b( \overline{Y}_0 ;\te_0)-  b( \overline{Y}_0 ;\te) |^{2}    \right)   \right| \rightarrow 0, $$
so the limit of the statistics $Q_n$ is different for $H=1/2$, where one obtains the standard least square estimator.
\end{remark}

\end{document}